\definecolor{darkblue}{RGB}{0, 0, 102}
\definecolor{IKB}{RGB}{0, 47, 167}
\newtheorem{theorem}{Theorem}
\newtheorem*{theorem*}{Theorem}
\newtheorem{lemma}[theorem]{Lemma}
\newtheorem{cor}[theorem]{Corollary}
\newtheorem{claim}[theorem]{Claim}
\newtheorem{remark}[theorem]{Remark}
\newcommand{\N}{\ensuremath{\mathbb{N}}}
\newcommand{\Z}{\ensuremath{\mathbb{Z}}}
\newcommand{\E}{\begin{equation}}
\newcommand{\EE}{\end{equation}}
\newcommand{\R}{\mathbb{R}}
\newcommand{\proj}{\operatorname{proj}}
\newcommand{\conv}{\operatorname{conv}}
\newcommand{\relint}{\operatorname{relint}}
\newcommand{\intr}{\operatorname{int}}
\newcommand{\aff}{\operatorname{aff}}
\newcommand{\cone}{\operatorname{cone}}
\newenvironment{cpf}{\begin{trivlist} \item[] {\em Proof of claim.}}{\hspace*{\stretch{1}} $\diamond$ \end{trivlist}}
\begin{document}

\title{Balas formulation for the union of polytopes is optimal}

\author{Michele Conforti\thanks{Dipartimento di Matematica ``Tullio Levi-Civita'', Universit\`a degli studi di Padova, Italy. Supported by the grant SID 2016.}, Marco Di Summa\footnotemark[1], Yuri Faenza\thanks{IEOR, Columbia University. Supported by a SEAS-IEOR starting grant and by the Swiss National Science Foundation.}}

\maketitle

\begin{abstract} A celebrated theorem of Balas gives a linear mixed-integer formulation for the union of two nonempty polytopes whose  relaxation gives the convex hull of this union. The number of inequalities in Balas formulation is linear in the number of inequalities that describe the two polytopes and the number of variables is doubled.

In this paper we show that this is best possible: in every dimension there exist two nonempty polytopes such that if a formulation for the convex hull of their union has a number of inequalities that is polynomial in the  number of inequalities that describe the two polytopes, then the number of additional variables is at least linear in the dimension of the polytopes.


We then show that this result essentially carries over if one wants to approximate the convex hull of the union of two polytopes and also in the more restrictive setting of lift-and-project.

\end{abstract}

\section{Introduction}

Linear extensions are a  powerful tool in  linear optimization, since they allow to reduce an optimization problem over a polyhedron $P$ to an analogous problem over a second polyhedron $Q$, that may be describable with a smaller system of linear constraints. For this reason, a number of recent studies (e.g.~\cite{Steurer,Sam,KaibelKostya,Thomas}) focus on proving upper and lower bounds on the \emph{extension complexity} of a polytope $P$, i.e., on the minimum number of linear inequalities needed to describe a linear extension of $P$. Bounds on the extension complexity hence guarantee (or disprove) the theoretical efficiency of linear programming methods for certain optimization problems.

For practical purposes, the number of additional variables used in a linear extension is also an important parameter, see e.g.~\cite{WolseyVV}.  This paper studies   the minimum number of variables needed to obtain a linear extension for the convex hull of the union of two  polytopes, where the number of inequalities describing the linear extension is polynomially bounded with respect to the number of inequalities in the descriptions of the two polytopes.

\subsection{Balas formulation for the union of polytopes}\label{sec:Balas}

For any set $X\subseteq\R^d$, we denote by $\conv(X)$ the convex hull of $X$.
We recall the following theorem of Balas~\cite{Balas}.

\begin{theorem}\label{thr:balas}  Let
 $P_1:=\{x \in \R^d: A_1x \leq b_1\}$ and $P_2:=\{x \in \R^d : A_2 x \leq b_2\}$ be nonempty polytopes. Then
\begin{equation}\label{eq-balas}
\begin{split}
\conv(P_1 \cup P_2)=\{ & x\in\R^d : \exists\, (x_1,x_2,\lambda)\in\R^d\times\R^d\times\R \mbox{ s.t. } \\
& x=x_1+x_2;\,Ax_1 \leq \lambda b_1; \, Ax_2 \leq (1-\lambda) b_2; \, 0\leq\lambda\leq 1\}.
\end{split}
\end{equation}
\end{theorem}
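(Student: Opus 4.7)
The plan is to prove the two set inclusions in \eqref{eq-balas} separately. Denote by $Q$ the set described on the right-hand side of \eqref{eq-balas}. The inclusion $\conv(P_1\cup P_2)\subseteq Q$ is the routine direction, while $Q\subseteq\conv(P_1\cup P_2)$ requires some care, because in the boundary cases $\lambda\in\{0,1\}$ the natural rescaling breaks down and one must invoke the fact that $P_1$ and $P_2$ are bounded (i.e., polytopes rather than just polyhedra).

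For $\conv(P_1\cup P_2)\subseteq Q$, I would start by observing that, since $P_1$ and $P_2$ are convex, every point $x\in\conv(P_1\cup P_2)$ can be written as $x=\lambda y_1+(1-\lambda)y_2$ for some $y_1\in P_1$, $y_2\in P_2$, and $\lambda\in[0,1]$: any convex combination of points of $P_1\cup P_2$ can be grouped into those lying in $P_1$ and those lying in $P_2$, each group being again in the corresponding polytope by convexity. Setting $x_1:=\lambda y_1$ and $x_2:=(1-\lambda)y_2$ immediately yields $x=x_1+x_2$, $A_1x_1\le\lambda b_1$, and $A_2x_2\le(1-\lambda)b_2$, so $(x_1,x_2,\lambda)$ certifies $x\in Q$.

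For $Q\subseteq\conv(P_1\cup P_2)$, let $(x_1,x_2,\lambda)$ witness membership of $x=x_1+x_2$ in $Q$. When $0<\lambda<1$ I would divide the constraints by $\lambda$ and $1-\lambda$ respectively: the points $y_1:=x_1/\lambda$ and $y_2:=x_2/(1-\lambda)$ then satisfy $A_iy_i\le b_i$, so $y_i\in P_i$, and $x=\lambda y_1+(1-\lambda)y_2\in\conv(P_1\cup P_2)$. The two extreme cases $\lambda=0$ and $\lambda=1$ are symmetric, so it suffices to treat $\lambda=0$: the inequality $A_1x_1\le 0$ says that $x_1$ lies in the recession cone of $P_1$, while $A_2x_2\le b_2$ gives $x_2\in P_2$. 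Since $P_1$ is a polytope, its recession cone is $\{0\}$, hence $x_1=0$ and $x=x_2\in P_2\subseteq\conv(P_1\cup P_2)$.

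The only nontrivial step, and the one I expect to be the actual obstacle, is handling these boundary values of $\lambda$: without boundedness of the $P_i$'s one would only be able to conclude that $x_1\in\mathrm{rec}(P_1)$, and the statement of the theorem would need to be modified to include the recession cones explicitly. This is precisely the reason the theorem is stated for polytopes rather than arbitrary polyhedra, and any proof that avoids this subtlety (e.g.\ by a closure or limiting argument as $\lambda\to 0^+$) must still implicitly use boundedness to ensure that the limit point lies in the set.
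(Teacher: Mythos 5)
Your proof is correct and follows essentially the same route as the paper's own sketch: both start from the description $\conv(P_1\cup P_2)=\{x:\exists\,(y_1,y_2,\lambda),\ x=\lambda y_1+(1-\lambda)y_2,\ A_iy_i\le b_i,\ 0\le\lambda\le1\}$, linearize via $x_1:=\lambda y_1$, $x_2:=(1-\lambda)y_2$, and rely on the recession cones $\{x:A_ix\le0\}$ being $\{0\}$ (boundedness) to handle the degenerate values $\lambda\in\{0,1\}$. You spell out the boundary analysis a bit more explicitly than the paper's one-line remark, but the underlying idea is identical.
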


The above result can be seen as follows. By definition of the convex hull operator, we have that
\begin{equation}\label{eq-balas2}
\conv(P_1 \cup P_2)=\{x: \exists\, (y_1,y_2,\lambda) \mbox{ s.t. } x=\lambda y_1+(1-\lambda) y_2;\,Ay_1 \leq  b_1; \, Ay_2 \leq  b_2; \, 0\leq\lambda\leq 1\}.
\end{equation}
Since $P_1$ and $P_2$ are nonempty polytopes, $\{x \in \R^d: A_1x \leq 0\}=\{x \in \R^d : A_2 x \leq 0\}=\{0\}$.
Therefore it is easy to argue that  the system in \eqref{eq-balas2} can be linearized by substituting $x_1:=\lambda y_1$ and $x_2:=(1-\lambda)y_2$ to obtain the system in \eqref{eq-balas}.  We refer to~\cite[Theorem 4.39]{CCZ} for the case in which $P_1$ and $P_2$ are (possibly empty) polyhedra. Note that as $\conv(P_1\cup \dots\cup P_k)=\conv(\conv (P_1\cup \dots \cup P_{k-1})\cup P_k)$, restricting to the case $k=2$ is with no loss of generality.\medskip

Theorem \ref{thr:balas} is fundamental for the geometric approach to Integer Programming, where, given a polytope $P\subseteq \R^d$,  tighter and tighter polyhedral relaxations of the set $S:=P\cap \Z^d$ are obtained as convex hulls of union of polytopes.
An example of this paradigm is as follows. Given  $(\pi,\pi_0)\in \Z^d\times\Z$, let
\begin{equation}\label{eq:split}
P_0 :=\{x \in P : \pi x \leq \pi_0\} \; \hbox{ and } \; P_1 :=\{x \in P : \pi x \geq \pi_0+1\}.\end{equation}
Then $\conv(P_0\cup P_1)\cap \Z^d=S$ and $\conv(P_0\cup P_1)\subseteq P$. This second containment is strict if and only if $\pi_0<\pi v< \pi_0+1$  for some vertex $v$ of $P$.  In this case  $\conv(P_0\cup P_1)$ is a tighter polyhedral relaxation for $S$. The \emph{split cuts} used in Integer Programming, see e.g. Chapter 5 in \cite{CCZ}, are the linear inequalities that are valid for $\conv(P_0\cup P_1)$, for some  $(\pi,\pi_0)\in \Z^d\times\Z$.
\medskip

Given  polytope $P\subseteq \R^d$,  polytope $Q\subseteq \R^{d+m}$ is a \emph{linear extension} (\emph{with $m$ additional variables}) of $P$ if there exists an affine map $\psi: \R^{d+m}\rightarrow \R^d$ such that $P=\psi(Q)$. We allow  $Q=P$. In this paper, the only affine maps we consider are orthogonal projections: i.e., $Q$ is a linear extension of $P$ when $P=\proj_x(Q):=\{x\in \R^d : \exists\, y\in \R^m \mbox{ s.t. } (x,y)\in Q\}$.  Note that restricting to orthogonal projections  is with no loss of generality.
\medskip

A system of inequalities describing a linear extension $Q$ of $P$ is a \emph{formulation} of $P$ whose \emph{size} is the number of inequalities.
The \emph{extension complexity} of $P$ is the minimum size of a formulation of $P$. Therefore, if we let
\begin{equation}\label{eq-balas3}
  Q:=\{(x,x_1,\lambda)\in \R^d\times\R^d\times\R:  Ax_1 \leq \lambda b_1; \, A(x-x_1) \leq (1-\lambda) b_2; \, 0\leq\lambda\leq 1\},
\end{equation}
then Theorem \ref{thr:balas} says that $\proj_x(Q)=\conv(P_1 \cup P_2)$. Furthermore,  given formulations of size $f_1$ and $f_2$ of nonempty polytopes $P_1$ and $P_2$,   we see that the  formulation of $Q$ given in \eqref{eq-balas3} has size $f_1+f_2+2$  and has $2d+1$ variables.  Hence the number of constraints is  linear in $f_1+f_2$ and the number of \emph{additional} variables is $d+1$. (Weltge~\cite[proof of Proposition 3.1.1]{Weltge} observed that the inequalities $0\leq \lambda \leq 1$ can be omitted from \eqref{eq-balas3} if both $P_1$ and $P_2$ have dimension at least $1$.)

The fact that the formulation of $Q$ has size $f_1+f_2+2$  has been exploited by several authors to construct small size linear extensions of polytopes that can be seen as the convex hull of the union of a polynomial number of polytopes with few inequalities. These results are  surveyed e.g. in  \cite{Conforti}, \cite{Kaibel}.
\medskip

 While most of the literature focuses on the smallest number of inequalities defining a linear extension of a given polytope, in this paper we focus on the minimum number of additional variables needed in a linear extension. More specifically, we address the following question:
\medskip

\emph{ Given  formulations of nonempty polytopes $P_1, P_2\subseteq \R^d$ with sizes $f_1$, $f_2$ respectively, let $Q$ be a linear extension of $\conv(P_1\cup P_2)$ whose  formulation has  $\mbox{poly}(f_1+f_2)$ inequalities. What is the minimum number of additional variables that $Q$ must have?}

\medskip

  We stress that  in the above question, the property $\proj_x(Q)=\conv(P_1 \cup P_2)$ must be satisfied for every choice of nonempty polytopes $P_1$, $P_2$.  If $P_1$, $P_2$ have specific properties, then few additional variables may suffice. For instance, Kaibel and Pashkovich~\cite{KaibelKostya} show that when $P_2$ is the reflection of $P_1$
  with respect to a hyperplane that leaves $P_1$ on one side, then $\conv(P_1 \cup P_2)$  admits a linear extension with only $f_1+2$ inequalities and one additional variable.

\subsection{Our contribution}

Our main result shows that if one constructs a formulation of $\conv(P_1 \cup P_2)$ whose size is polynomially bounded in the sizes of the descriptions of $P_1$ and $P_2$, then $\Omega(d)$ additional variables are needed. In other words, the construction of Balas is optimal in this respect. More specifically, we have the following:

\begin{theorem}\label{thr:main}
	Fix a polynomial $\sigma$. For each odd $d \in \N$, there exist formulations of nonempty polytopes  $P_1, P_2\subseteq \R^d $ of size $f_1$ and $f_2$ respectively, such that  any formulation of $\conv(P_1\cup P_2)$ of size at most  $\sigma(f_1+f_2)$ has $\Omega(d)$ additional variables.
\end{theorem}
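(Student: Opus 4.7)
My plan is to use cyclic polytopes---known to achieve many facets per vertex---as the convex hull of two simpler polytopes presented in vertex representation. For $d$ odd, set $n := 2d^2$ and let $v_i := (t_i, t_i^2, \ldots, t_i^d)$ for distinct reals $t_1 < t_2 < \cdots < t_n$. Define $P_1 := \conv\{v_1, \ldots, v_{n/2}\}$ and $P_2 := \conv\{v_{n/2+1}, \ldots, v_n\}$, so that $\conv(P_1 \cup P_2)$ is the cyclic polytope $C(n, d)$, which by Gale's evenness criterion has $\Theta(n^{(d-1)/2}) = \Theta(d^{d-1})$ facets. Each $P_i$ admits its natural vertex-representation formulation: introduce convex-combination multipliers $\lambda_1, \ldots, \lambda_{n/2} \geq 0$ with $\sum_j \lambda_j = 1$ and $x = \sum_j \lambda_j v_j$; this requires only $O(n) = O(d^2)$ inequalities, so $f_1 + f_2 = O(d^2)$ and hence $\sigma(f_1 + f_2) = \mathrm{poly}(d)$ for any fixed polynomial $\sigma$.

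The key ingredient is a standard Fourier--Motzkin facet bound: if $Q \subseteq \R^{d+m}$ is described by $f$ inequalities and $P = \proj_x(Q) \subseteq \R^d$, then $P$ has at most $\binom{f}{m+1}$ facets. The reason is that facet-defining inequalities of $P$ correspond to extreme rays of the projection cone $L := \{\lambda \in \R^f_{\geq 0} : B^{\top} \lambda = 0\}$, where $B$ is the coefficient matrix of the $m$ new variables in $Q$; and since $L$ lives in a linear subspace of codimension $m$ inside $\R^f$, any extreme ray of $L$ has support of size at most $m+1$. Equality constraints in $Q$ are handled by rewriting each as a pair of opposite inequalities, which at most doubles $f$ and is harmlessly absorbed into the polynomial bound.

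Combining the two pieces: suppose $\conv(P_1 \cup P_2)$ admits a formulation with at most $f \leq \sigma(f_1+f_2) = \mathrm{poly}(d)$ inequalities and $m$ additional variables. Then $\binom{f}{m+1} \geq \Theta(d^{d-1})$, i.e., $(m+1)\log f \geq \Omega(d \log d) - O(\log((m+1)!))$. Since $\log f = O(\log d)$, this forces $m = \Omega(d)$, as required. The main technical obstacle is really the projection facet bound itself---classical but requiring care about equality constraints, non-redundancy of Farkas multipliers, and constants. The quantitative $\Omega(d)$ (rather than the weaker $\Omega(d / \log d)$ one would get from, say, the cross-polytope with its mere $2^d$ facets) crucially relies on the fact that the cyclic polytope's facet count $d^{\Omega(d)}$ is \emph{super}-exponential in $d$, allowing the $\log d$ slack in the comparison $(m+1)\log f \geq \Omega(d \log d)$ to be translated into the full linear lower bound on~$m$.
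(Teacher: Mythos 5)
Your proof is correct, and the high-level counting strategy matches the paper's: exhibit $P_1,P_2$ with small-size formulations whose union's convex hull has $d^{\Omega(d)}$ facets, then invoke a bound showing that the facet count of a projection grows at most like $f(Q)^{m+1}$. Your projection bound, obtained by observing that extreme rays of the Fourier--Motzkin multiplier cone have support at most $m+1$, is an equivalent variant of the paper's Lemma~\ref{lem:hammer}, which instead counts faces of $Q$ of dimension at least $d-1$; both give $f(P)\le(f(Q)+1)^{m+1}$ (note that the number of extreme rays is at most $\sum_{j\le m+1}\binom{f}{j}$, not $\binom{f}{m+1}$, since supports can be smaller than $m+1$ --- this does not affect the asymptotics). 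Where the two proofs genuinely diverge is the construction. You take a cyclic polytope directly and split its $2d^2$ vertices into two halves; since each $P_i$ is then itself a cyclic polytope with $d^{\Theta(d)}$ facets, you are forced to formulate $P_1,P_2$ via $\mathcal{V}$-representations with $O(d^2)$ auxiliary multiplier variables, which the theorem's notion of ``formulation'' does permit. The paper instead takes the Cayley embedding of the polar of a cyclic polytope and a perturbed copy of it, and proves via optimality cones (Lemmas~\ref{le-subspace}, \ref{le-intoptcones}, \ref{lem:FacesCayley}) that this embedding has $d^{\Omega(d)}$ facets. This is substantially more machinery, but the payoff is that $P_1,P_2$ have only $O(d^2)$ facets, so their formulations live in the original $\R^d$ with no auxiliary variables at all, and --- importantly --- the same construction carries over to the lift-and-project setting of Theorem~\ref{thr:our-split}, where $P_1,P_2$ must arise as the two faces of a single polytope in $[0,1]^d$ sliced by $x_d=0$ and $x_d=1$; your $\mathcal{V}$-representation trick would not deliver that corollary. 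A minor imprecision: for $n=2d^2$ and $d\to\infty$ the facet count of $C(n,d)$ is $\binom{n-O(d)}{\lfloor d/2\rfloor}=d^{\Theta(d)}$, but it is not literally $\Theta(n^{(d-1)/2})$ since the lower index grows; again the $\Omega(d)$ conclusion is unaffected.
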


We then turn to polytopes whose orthogonal projection gives an outer approximation of $\conv(P_1\cup P_2)$. Given $\varepsilon\ge0$, we say that a polytope $P'\subseteq \R^d$ is an \emph{$\varepsilon$-approximation} of a  nonempty polytope $P\subseteq \R^d$ if $P\subseteq P'$ and for all $c \in \R^d$ we have
\begin{equation}\label{eq:approx}
\underset{x \in P'}{\max}\, cx - \underset{x \in P'}{\min}\, cx \leq (1+\varepsilon) \big(\underset{x \in P}{\max}\, cx - \underset{x \in P}{\min}\, cx \big).
\end{equation}
In particular, $P$ and $P'$ have that same affine hull.   So the only  $\varepsilon$-approximation of a point is the point itself.
\medskip

 Our second result can be seen as an $\varepsilon$-approximate version of Theorem \ref{thr:main}.

\begin{theorem}\label{thr:main-approx}
Fix $\varepsilon>0$ and a polynomial $\sigma$. For each $d \in \N$, there exist formulations of nonempty polytopes  $P_1, P_2\subseteq \R^d $ of size $f_1$ and $f_2$ respectively, such that any $\varepsilon$-approximation of $\conv(P_1\cup P_2)$ of size at most  $\sigma(f_1+f_2)$ has $\Omega(d/\log d)$ additional variables.
 \end{theorem}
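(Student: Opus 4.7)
The plan is to follow the same blueprint as for Theorem~\ref{thr:main}: lower bound the number of vertices of any $\varepsilon$-approximation of $R := \conv(P_1 \cup P_2)$ and then invoke a vertex-counting bound on the extension. I would take $P_1, P_2 \subseteq \R^d$ with $f_1 + f_2 = \mathrm{poly}(d)$ facets so that $R$ has nearly the maximum possible number of vertices, roughly $2^{\Theta(d \log d)}$, saturating the Upper Bound Theorem for polytopes with $\mathrm{poly}(d)$ facets. A natural candidate is to let at least one of $P_1, P_2$ be a suitable perturbation of the polar of a cyclic polytope, which both realizes this vertex count and offers well-controlled normal cones at each vertex.

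The key technical step is to show that every $\varepsilon$-approximation $R'$ of $R$ still satisfies $|\verti(R')| \ge 2^{\Omega(d \log d)}$, only marginally smaller than $|\verti(R)|$. To this end, I would identify a subfamily $\{v_i\}$ of vertices of $R$ that are \emph{robust}: for each $v_i$, there is a linear functional $c_i$ such that $c_i v_i$ exceeds $c_i u$ for every other $u \in R$ by a margin strictly larger than $\varepsilon \cdot (\max_{x \in R} c_i x - \min_{x \in R} c_i x)$. By the $\varepsilon$-approximation property~\eqref{eq:approx}, the maximizer of $c_i$ over $R'$ is then a vertex of $R'$ close to $v_i$, and distinct $v_i$'s yield distinct such vertices.

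Once this is established, the vertex-counting bound closes the argument: if $Q \subseteq \R^{d+k}$ has $m \le \sigma(f_1+f_2) = \mathrm{poly}(d)$ facets and $\proj_x(Q) = R'$, then every vertex of $R'$ lifts to a vertex of $Q$, and a polytope in $\R^{d+k}$ with $m$ facets has at most $\binom{m}{d+k}$ vertices. This yields $(d+k) \log m \ge \log |\verti(R')| = \Omega(d \log d)$; since $\log m = O(\log d)$, tuning the constants of the construction gives $k = \Omega(d/\log d)$. The main obstacle is the preservation step: the robust vertices are witnessed by directions that must be pairwise well-separated on the sphere, and a naive volumetric packing would cap their number at $(O(1/\varepsilon))^d$; maintaining an exponential-in-$d \log d$ family of such witnesses requires exploiting the tight combinatorial structure of the cyclic construction (narrow but still distinguishable normal cones). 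The logarithmic loss relative to Theorem~\ref{thr:main}'s $\Omega(d)$ is intrinsic, reflecting that the approximation margin weakens the rigidity available to separate vertices, which in turn costs a factor $\log d$ when translated through the counting inequality.
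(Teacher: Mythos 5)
Your proposal takes a genuinely different route from the paper, but there is a fatal gap in the counting step. You propose to lower bound the number of \emph{vertices} of $R'$ and conclude via the bound $|\verti(Q)|\le\binom{m}{d+k}$ for an extension $Q\subseteq\R^{d+k}$ with $m$ facets. Taking logs gives $(d+k)\log m\ge\log|\verti(R')|$, which bounds $d+k$, not $k$. If $m\le d^C$ where $C$ is the degree of the arbitrary polynomial $\sigma$, then even the optimistic estimate $\log|\verti(R')|=\Omega(d\log d)$ yields only $d+k=\Omega(d/C)$; for $C\ge$ the hidden constant this is weaker than the trivial $d+k\ge d$, so one cannot conclude $k=\Omega(d/\log d)$ for all polynomials $\sigma$. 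The paper's Lemma~\ref{lem:hammer} avoids exactly this: a \emph{facet} of the $d$-dimensional projection comes from a face of $Q$ of dimension at least $d-1$, which is the intersection of at most $m+1$ (not $d+m$) facets of $Q$. This gives $f(P)\le(f(Q)+1)^{m+1}$ and hence $m=\Omega(\log f(P)/\log f(Q))$, a bound that cleanly isolates the number of additional variables and is insensitive to the degree of $\sigma$. Counting vertices simply cannot deliver this, because a vertex of a $(d+k)$-polytope is pinned down by $d+k$ facets, not $m+1$.

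There are further issues downstream of this. Because Lemma~\ref{lem:hammer} is phrased in terms of facets, the paper needs a polytope whose $\varepsilon$-approximations provably have exponentially many \emph{facets}. It uses the cross-polytope $Q_d^\triangle$ (the convex hull of two parallel simplicial facets $P_1$, $P_{-1}$, each with $d+1$ facets) and shows via a point-set argument (Lemma~\ref{lem:approx-cross}) that any $\varepsilon$-approximation retains $\Omega(\kappa^d)$ facets: one exhibits $2^d$ points at coordinates $\pm(1+\delta)/d$ that no $\varepsilon$-approximation can contain, and an entropy bound shows each facet-defining inequality of $Q_d^\triangle$ cuts off at most $\bar\kappa^d$ of them with $\bar\kappa<2$ (after first reducing, via Lemma~\ref{lem:cara}, to approximations using only facet-defining inequalities of $Q_d^\triangle$). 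Your ``robust vertices'' scheme is appealing but is not worked out; you yourself note that a volumetric packing bound caps the number of sufficiently separated witness directions at $(O(1/\varepsilon))^d=2^{O(d)}$, which already falls short of the $2^{\Theta(d\log d)}$ you want, and the claim that ``exploiting the tight combinatorial structure of the cyclic construction'' overcomes this is asserted without justification. Finally, your explanation that the $\log d$ loss is ``intrinsic'' to the approximation setting is off target: in the paper the loss simply reflects that $Q_d^\triangle$ has $2^d$ facets (giving $\log f(P)=\Theta(d)$) rather than the $d^{\Omega(d)}$ facets achieved by the Cayley construction used for Theorem~\ref{thr:main}.
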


\subsection{MIP representations and the convex hull property}

A set $S\subseteq \R^d$ is \emph{MIP (mixed-integer programming) representable} if there exist  matrices $A$, $B$, $C$ and a vector $d$ such that $S=\proj_x(Q)$, where $Q:=\{(x,y,z):Ax+By+Cz\le d,\,z\mbox{ integral}\}$.  Under the condition that matrices $A$, $B$, $C$ and  vector $d$ be rational, the MIP representable sets were characterized by Jeroslow and Lowe~\cite{JeroslowLowe}, see also Basu et al.~\cite{Basuetal} for a different characterization. We refer to the recent survey of  Vielma \cite{Vielma} on MIP representability.

If $P_1$ and $P_2$ are nonempty polytopes, then $P_1\cup P_2$ is a MIP representable set. Indeed a MIP representation of this set can be obtained by imposing integrality on variable $\lambda$ in the system in \eqref{eq-balas3}; see \cite{Balas}.
This is not the only representation of $P_1\cup P_2$: the famous big-$M$ method gives a representation with $f_1+f_2+2$ inequalities and only $d+1$ variables, where $f_1$ and $f_2$ are the sizes of formulations of $P_1$, $P_2$. So this representation is more compact than the one given by Balas.

If $Q:=\{(x,y,z):Ax+By+Cz\le d,\,z\mbox{ integral}\}$ is a MIP representation of $P_1\cup P_2$, then
\begin{equation}\label{eq-convhull}
\conv(P_1\cup P_2)\subseteq {\textstyle\proj_x}(\{(x,y,z):Ax+By+Cz\le d\}).
\end{equation}
We say that a MIP representation of $P_1\cup P_2$ has the \emph{convex hull property} if the two sets in \eqref{eq-convhull} coincide. It follows from Theorem \ref{thr:balas} that the MIP representation obtained by imposing integrality on $\lambda$ in \eqref{eq-balas3} has the convex hull property and it is immediate to check that the one given by the big-$M$ method does not.

The following is a consequence of Theorem \ref{thr:main}.

\begin{theorem}\label{thr-convhull} Fix a polynomial $\sigma$. For each odd $d \in \N$, there exist formulations of nonempty polytopes  $P_1, P_2\subseteq \R^d $ of size $f_1$ and $f_2$ respectively, such that any  MIP representation of $P_1 \cup P_2$ with at most $\sigma(f_1+ f_2)$ inequalities that has the convex hull property has $\Omega(d)$ additional variables.
\end{theorem}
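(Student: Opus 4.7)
The plan is to derive Theorem \ref{thr-convhull} as a direct corollary of Theorem \ref{thr:main}, exploiting that a MIP representation with the convex hull property, once integrality is dropped, becomes a linear extension of $\conv(P_1\cup P_2)$ of essentially the same size. I take the same polytopes $P_1,P_2\subseteq\R^d$ with formulations of sizes $f_1,f_2$ furnished by Theorem \ref{thr:main}.

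Suppose $Q=\{(x,y,z):Ax+By+Cz\le d,\ z\text{ integral}\}$ is a MIP representation of $P_1\cup P_2$ with at most $\sigma(f_1+f_2)$ inequalities, the convex hull property, and $m$ additional variables (i.e., the total dimension of $(y,z)$). Dropping integrality yields the polyhedron $R=\{(x,y,z):Ax+By+Cz\le d\}$, and the convex hull property is precisely the equality $\proj_x(R)=\conv(P_1\cup P_2)$. So $R$ is a (possibly unbounded) polyhedral extension of $\conv(P_1\cup P_2)$ with the same number of inequalities and additional variables.

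To match the polytope-valued definition of linear extension used in Theorem \ref{thr:main}, the next step is to truncate $R$ to a polytope without altering its projection. For each vertex $v$ of $\conv(P_1\cup P_2)$, choose a preimage $(v,y_v,z_v)\in R$, and let $M$ exceed the $\ell_\infty$-norms of all $(y_v,z_v)$. Then $R':=R\cap\{(x,y,z):\|(y,z)\|_\infty\le M\}$ is a polytope that contains the convex hull of the chosen preimages, so $\proj_x(R')\supseteq\conv(P_1\cup P_2)$; the reverse inclusion is inherited from $R'\subseteq R$. Hence $R'$ is a polytope linear extension of $\conv(P_1\cup P_2)$ with at most $\sigma(f_1+f_2)+2m$ inequalities and $m$ additional variables.

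To conclude, I may assume $m\le d$ (otherwise $m=\Omega(d)$ holds trivially), and since any nonempty polytope in $\R^d$ requires at least $d+1$ inequalities in its description, $d\le f_1\le f_1+f_2$. Thus $\sigma(f_1+f_2)+2m$ is bounded by some polynomial $\sigma'(f_1+f_2)$, and Theorem \ref{thr:main} applied to $R'$ forces $m=\Omega(d)$. The only delicate point is the truncation step, which converts a polyhedral extension into a polytope extension at the cost of $2m$ additional inequalities and no additional variables; everything else is an immediate consequence of the definition of the convex hull property and the lower bound already established in Theorem \ref{thr:main}.
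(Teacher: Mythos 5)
Your proof is correct and supplies exactly the details the paper leaves implicit when it states Theorem~\ref{thr-convhull} as a consequence of Theorem~\ref{thr:main}: dropping integrality turns the MIP representation into a polyhedral extension of $\conv(P_1\cup P_2)$ via the convex hull property, and the truncation step (bounding $\|(y,z)\|_\infty$ by a sufficiently large $M$) is a genuine detail needed to meet the paper's polytope-valued definition of linear extension, costing only $2m$ extra inequalities. One small bookkeeping point: since you end up invoking Theorem~\ref{thr:main} with a polynomial $\sigma'(t):=\sigma(t)+2t$ rather than with $\sigma$, you should really pick $P_1,P_2$ by applying Theorem~\ref{thr:main} to $\sigma'$ from the outset; this is harmless here because the construction behind Theorem~\ref{thr:main} (the Cayley embedding of $D^{Cy}$ and $Q^{Cy}$) does not depend on the polynomial, only the hidden constant in $\Omega(d)$ does, but a clean write-up would fix the order of quantifiers.
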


This  paper is organized as follows. In Section \ref{sec:strategy} we describe the main idea of our approach, which relies on a counting argument and on the existence of a polytope $P\subseteq\R^d$ that has $d^{\Omega(d)}$ facets and is the convex hull of two polytopes with polynomially (in $d$) many facets. In Section \ref{sec:tools} we develop some geometric tools for the construction of $P$, which is then obtained in Section \ref{sec:construction} via a construction using the Cayley embedding. In Section \ref{sec:Proof-of-approx} we prove Theorem \ref{thr:main-approx}, while in Section \ref{sec:split} we investigate some implications of Theorem \ref{thr:main} for the technique of lift-and-project (see Theorem \ref{thr:our-split}).

\section{An outline of the proof}\label{sec:strategy}

We assume familiarity with polyhedral theory (see e.g.~\cite{CCZ,Ziegler}). Given $S\subseteq \R^d$, we denote with $\conv(S)$, $\aff(S)$ and $\cone(S)$ its convex hull, affine hull and conical hull. We also let
$\intr(S)$, $\relint(S)$, $\dim(S):=\dim(\aff(S))$ denote the interior, relative interior, and affine dimension of $S$. A \emph{$d$-polytope} is a polytope of dimension $d$, and a \emph{$k$-face} of a polytope $P$ is a face of $P$ of dimension $k$.

Our approach to proving Theorem \ref{thr:main} is based on the following lemma relating the number of facets of a linear extension of a given polytope to the number of additional variables.  Given a $P\subseteq \R^d$, we let $f(P)$ denote the number of facets of $P$.

\begin{lemma}\label{lem:hammer}
Let $Q\subseteq \R^{d+m}$ be a $(d+m)$-polytope which is a linear extension  of a  $d$-polytope $P\subseteq \R^d$. Then $m=\Omega\left(\frac{\log f(P)}{\log f(Q)}\right)$.
\end{lemma}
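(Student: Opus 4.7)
The plan is to bound $f(P)$ in terms of $f(Q)$ and $m$, by showing that each facet of $P$ lifts to a face of $Q$ of codimension at most $m+1$, and then counting such low-codimension faces combinatorially.

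\emph{Lifting facets.} To every facet $F$ of $P$ I would associate a distinguished face of $Q$ of codimension at most $m+1$. Let $a^\top x \leq b$ be a facet-defining inequality of $P$, so that $F = P \cap \{x:a^\top x = b\}$. Since $P = \proj_x(Q)$, the lifted inequality $(a,0)^\top(x,y) \leq b$ is valid for $Q$, and hence
\[
F^\star := Q \cap \{(x,y)\in\R^{d+m}\,:\,a^\top x = b\}
\]
is a face of $Q$, with $\proj_x(F^\star) = F$. Because projection does not increase dimension, $\dim F^\star \geq \dim F = d-1$, i.e., $F^\star$ has codimension at most $m+1$ in $Q$. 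The assignment $F \mapsto F^\star$ is injective since $F = \proj_x(F^\star)$.

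\emph{Counting low-codimension faces.} I would then show that a polytope with $N$ facets has at most $\binom{N}{j}$ faces of codimension exactly $j$. Any face $G$ satisfies $G = Q \cap \aff(G)$, and $\aff(G)$ is the intersection of the affine hulls of all facets of $Q$ containing $G$. When $\operatorname{codim}(G)=j$, one can extract $j$ such facet-defining hyperplanes whose normals are linearly independent, so that $G$ is the intersection of $Q$ with those $j$ hyperplanes. This yields an injection from codimension-$j$ faces to $j$-subsets of the facets of $Q$, giving the claimed bound.

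Combining the two ingredients,
\[
f(P) \;\leq\; \sum_{j=0}^{m+1}\binom{f(Q)}{j} \;\leq\; (m+2)\,f(Q)^{m+1}.
\]
Using the crude estimate $m+2 \leq d+m+1 \leq f(Q)$ (any polytope of positive dimension has at least one more facet than its dimension), taking logarithms gives $\log f(P) \leq (m+2)\log f(Q)$, and therefore $m = \Omega(\log f(P)/\log f(Q))$. The main point requiring care is the combinatorial counting in the second step, since $Q$ need not be simple and one must argue that $j$ independent facet hyperplanes can always be selected from those containing a given codimension-$j$ face; the rest is routine bookkeeping about how valid inequalities of $P$ lift to $Q$.
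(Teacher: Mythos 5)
Your proof is correct and follows essentially the same strategy as the paper's: lift each facet of $P$ to a face of $Q$ of codimension at most $m+1$, count such faces of $Q$ by noting that a codimension-$j$ face is cut out by $j$ of its facets (giving the $\sum_j \binom{f(Q)}{j}$ bound), and take logarithms. The only difference is cosmetic bookkeeping — the paper bounds the sum by $(f(Q)+1)^{m+1}$ via the binomial theorem while you use $(m+2)f(Q)^{m+1}$ and absorb the $m+2$ factor — which leads to the same asymptotic conclusion.
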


\begin{proof}
For $0\le k\le d+m-1$, every $k$-face of $Q$ is the intersection of $d+m-k$ facets of $Q$ (this choice may not be unique). Then, by the binomial theorem,  the number of proper faces of $Q$ of dimension at least $d-1$ is at most
$$\sum_{j=1}^{m+1}{f(Q)\choose j}\le \sum_{j=1}^{m+1}(f(Q))^j\cdot1^{m+1-j}\le (f(Q)+1)^{m+1}.$$

Let $F$ be a facet of $P$. Then $F$ is the projection of a face $F_Q$ of $Q$ and  $\dim(F_Q)\ge \dim(F)=d-1$. Therefore the number of facets of $P$ is bounded by the number of proper faces of $Q$ of dimension at least $d-1$ and by the above argument we have that $m=\Omega\left(\frac{\log f(P)}{\log f(Q)}\right)$.
\end{proof}

We will show later (Theorem \ref{th-construction}) that for every odd $d\ge3$ there exists a $d$-polytope $P$ having  $d^{\Omega(d)}$ facets which is the convex hull of two polytopes $P_1$ and $P_2$, each having $(d-1)^2$ facets.
Let $Q$ be any linear extension of $P$ with $f(Q)=\sigma(f(P_1)+f(P_2))$.  It is well-known that $Q$ can be assumed to be full-dimensional. Since $f(P_1)=f(P_2)=(d-1)^2$, we have that $f(Q)$ is bounded by a polynomial in $d$. Let  $d+m$ be the dimension of $Q$. Then by Lemma \ref{lem:hammer}, the number of additional variables is  $m=\Omega\left(\frac{\log f(P)}{\log f(Q)}\right)=\Omega\left(\frac{d \log d}{\log d}\right)=\Omega(d)$.
This proves Theorem \ref{thr:main}.  Similar counting arguments (with different polytopes) settle Theorem \ref{thr:main-approx} and Theorem \ref{thr:our-split}.

The next two sections are devoting to proving Theorem \ref{th-construction}, which is the  missing ingredient to complete proof of Theorem \ref{thr:main}.

\section{Some tools}\label{sec:tools}

\subsection{Optimality Cones}

 We let $\mathcal{F}(P)$ be the set of the nonempty faces of a polytope $P$.

Let $P\subseteq \R^d$ be a polytope and let $F\in \mathcal{F}(P)$ be a nonempty face of $P$. An inequality $cx\le \delta$ \emph{defines} $F$ if
$$P\subseteq \{x\in \R^d:cx\le \delta\}\;\mbox{ and  }\;F=P\cap \{x\in \R^d:cx=\delta\}.$$
The \emph{optimality cone} of $F$ is the set
\[C_P(F):=\{c\in \R^d: \exists\, \delta \mbox{ s.t. }cx\le \delta \mbox{ defines }F\}.\]
We also consider the set
\[\overline{C}_P(F):=\bigcup_{F'\in \mathcal{F}(P), F'\supseteq F}C_{P}(F').\]

\begin{remark}\label{re-optcones} Let $P\subseteq \R^d$ be a  nonempty polytope. The following hold:
\begin{enumerate}
  \item $\bigcup_{F\in \mathcal{F}(P)}C_P(F)=\R^d$ and $C_{P}(F_i)\cap C_{P}(F_j)=\emptyset$ for every $F_i,F_j \in \mathcal{F}(P)$ with $F_i\ne F_j$.
  \item $C_P(P)$ is the subspace $\{c\in \R^d: \exists\, \delta \mbox{ s.t. }cx= \delta \: \forall x\in P\}.$ So $\dim(P)=d$ if and only if $C_P(P)=\{0\}$.
   \item For every $F\in \mathcal{F}(P)$, $\overline{C}_P(F)$ is the polyhedral cone generated by
	$$\{c\in \R^d: \exists\, \delta \mbox{ s.t.  } cx\le \delta \mbox{ defines  $P$ or a facet containing } F \}.$$
       \item For every $F\in \mathcal{F}(P)$, we have $C_P(F)=\relint\left(\overline{C}_P(F)\right)$.
  \item For every $F\in \mathcal{F}(P)$, $\dim (F)+\dim(C_P(F))=d$.
\end{enumerate}
\end{remark}

\begin{lemma}\label{le-subspace}  Let $P$ be a $d$-polytope. For every dimension $k$, $0\le k\le d$,
  there exists a linear subspace $V\subseteq \R^d$ such that $\dim(V)=k$ and
  $V\cap  \aff\left(\overline{C}_P(F)\right)=\{0\}$ for every $k$-face $F$ of $P$.
	\end{lemma}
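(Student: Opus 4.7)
The plan is to reduce the lemma to a transversality statement about finitely many linear subspaces, then build $V$ by choosing basis vectors one at a time.

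The first step is to verify that, for every $k$-face $F$ of $P$, the set $W_F := \aff(\overline{C}_P(F))$ is a \emph{linear} subspace of $\R^d$ of dimension exactly $d-k$. The dimension is immediate from items 4 and 5 of Remark \ref{re-optcones}: item 5 gives $\dim(C_P(F)) = d-k$, and item 4 gives $\dim(\overline{C}_P(F)) = \dim(C_P(F))$. Linearity follows because $P \in \mathcal{F}(P)$ and $P \supseteq F$, so by the definition of $\overline{C}_P(F)$ one has $C_P(P) \subseteq \overline{C}_P(F)$; item 2 gives $0 \in C_P(P)$, so $0 \in \overline{C}_P(F)$, and any affine hull containing the origin is a linear subspace. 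Writing $F_1,\ldots,F_N$ for the finitely many $k$-faces of $P$ and setting $W_i := W_{F_i}$, the lemma reduces to the following purely linear-algebraic statement: given finitely many linear subspaces $W_1,\ldots,W_N \subseteq \R^d$ each of dimension $d-k$, produce a $k$-dimensional linear subspace $V$ with $V \cap W_i = \{0\}$ for every $i$.

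For the reduced problem I would construct $V$ inductively. Set $V_0 := \{0\}$, and at step $j \in \{1,\ldots,k\}$, assuming $\dim V_{j-1} = j-1$ and $V_{j-1} \cap W_i = \{0\}$ for every $i$, pick a vector $v_j \in \R^d \setminus \bigcup_i (V_{j-1} + W_i)$ and set $V_j := V_{j-1} + \R v_j$. Such a $v_j$ exists because the inductive invariant gives $\dim(V_{j-1} + W_i) = (j-1) + (d-k) < d$ for $j \le k$, so each $V_{j-1} + W_i$ is a proper linear subspace of $\R^d$, and $\R^d$ is not a finite union of proper subspaces. A short check shows that the invariant survives and $\dim V_j = j$: if $x = y + \alpha v_j \in V_j \cap W_i$ with $\alpha \ne 0$, then $v_j \in W_i + V_{j-1}$, contradicting the choice; hence $\alpha = 0$ and $x \in V_{j-1} \cap W_i = \{0\}$. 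After $k$ iterations, $V := V_k$ is the required subspace.

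I do not anticipate a genuine obstacle. The argument hinges on two small facts, both elementary: that $0 \in \overline{C}_P(F)$ (so that the affine hull is automatically a linear subspace), and that $\R^d$ is never a finite union of proper linear subspaces. The only point that requires bookkeeping is the strict inequality $(j-1)+(d-k) < d$, which is exactly what forces the induction to stop after $k$ steps and makes the dimension of the output come out correctly.
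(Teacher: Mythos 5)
Your proof is correct and follows essentially the same route as the paper's: both build $V$ one dimension at a time, at each step choosing a vector outside the finite union of proper subspaces $V_{j-1}+\aff\bigl(\overline{C}_P(F)\bigr)$ and maintaining the invariant $V_j\cap\aff\bigl(\overline{C}_P(F)\bigr)=\{0\}$. The only cosmetic difference is that you justify the existence of the new vector by noting $\R^d$ is not a finite union of proper subspaces, while the paper invokes a Lebesgue-measure-zero argument; and you explicitly spell out (via items 2, 4, 5 of Remark~\ref{re-optcones}) that $\aff\bigl(\overline{C}_P(F)\bigr)$ is a linear subspace of dimension $d-k$, a fact the paper uses implicitly.
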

	
\begin{proof}
Define
$$\mathcal{A}:=\bigcup_{\textrm{$F$ is a $k$-face of $P$} }\aff\left(\overline{C}_P(F)\right).$$
We iteratively construct subspaces $\{0\}=:V_0\subset \dots \subset V_k$ such that $\dim(V_i)=i$ and $V_i\cap  \mathcal{A}=\{0\}$, $i=0,\dots,k$.

Assume $\dim(V_i)=i$ and $V_i\cap  \mathcal{A}=\{0\}$ for some $i<k$.
For every $k$-face $F$ of $P$ we have that $\aff\left(\overline{C}_P(F)\cup V_i\right)$ is a linear space of dimension $d-k+i<d$.
Since the number of $k$-faces of $P$ is finite, the set
$$\mathcal{S}:=\bigcup_{\textrm{$F$ is a $k$-face of $P$} }\aff\left(\overline{C}_P(F)\cup V_i\right)$$
 has Lebesgue measure 0. Therefore $\R^d\setminus \mathcal{S}$ contains  a nonzero vector $v$. Let $V_{i+1}$ be the linear space generated by $V_i\cup \{v\}$. Then $\dim(V_{i+1})=i+1$ and $V_{i+1}\cap  \mathcal{A}=\{0\}$.
\end{proof}

\subsection{The polar of a cyclic polytope}

The \emph{moment curve} in $\R^d$ is defined as
$$t\mapsto x(t):=\left(\begin{array}{c}t^1\\ t^2 \\  \vdots \\ t^{d}  \end{array}\right)\in \R^d.$$
 Given pairwise distinct real numbers $t_1,\dots,t_k$,  the \emph{cyclic polytope} $P^{Cy}(d,t_1,\dots t_k)$ is $\conv(x(t_1),\dots x(t_k))$. It is well-known that, for $d$ and $k$ fixed, the combinatorial structure of a cyclic polytope does not depend on the  choice of $t_1,\dots, t_k$. So we denote such a polytope by $P^{Cy}(d,k)$. In particular (see~\cite[Section 4.7]{Grunbaum}):

\begin{lemma}\label{le-cycpoly} For $k\ge d+1$, $P^{Cy}(d,k)$ is a $d$-polytope  with $k$ vertices which is simplicial (i.e., all of its proper faces are simplices). For every subset $S$ of vertices with $|S|\le \frac{d}{2}$, $\conv(S)$ is a $(|S|-1)$-face. So for $h\le \frac{d}{2}-1$, $P^{Cy}(d,k)$ has $\binom{k}{h+1}$ $h$-faces.
  \end{lemma}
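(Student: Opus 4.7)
The statement bundles three classical facts about cyclic polytopes, and my plan is to prove them in one coordinated sweep using a single Vandermonde observation and one polynomial construction. First, I would observe that any $d+1$ distinct points on the moment curve are affinely independent: the $(d+1)\times(d+1)$ matrix with rows $(1, t_{i_j}, t_{i_j}^2, \ldots, t_{i_j}^d)$ is a Vandermonde matrix whose determinant $\prod_{j<\ell}(t_{i_\ell}-t_{i_j})$ is nonzero. This instantly yields $\dim P^{Cy}(d,k) = d$ when $k \geq d+1$.

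The workhorse for the face structure is the following polynomial trick. For a subset $S = \{t_{i_1}, \ldots, t_{i_s}\}$ of vertex parameters with $2s \leq d$, I would consider
$$p(t) := \prod_{j=1}^{s}(t - t_{i_j})^2 = \alpha_0 + \alpha_1 t + \cdots + \alpha_d t^d,$$
where high-degree coefficients may vanish. Setting $c := (\alpha_1, \ldots, \alpha_d) \in \R^d$, one reads off the identity $c \cdot x(t) = p(t) - \alpha_0 \geq -\alpha_0$, with equality precisely when $t \in S$. Hence $\conv(S)$ is a face of $P^{Cy}(d,k)$, and by the Vandermonde fact above its $|S|$ spanning vertices are affinely independent, so this face has dimension $|S|-1$ and is itself a simplex. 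Specialising to $|S|=1$ confirms that every $x(t_i)$ is a vertex, giving the vertex count $k$. For $h \leq d/2-1$, the obvious bijection between $(h+1)$-subsets of the $k$ vertices and the $h$-faces then yields the count $\binom{k}{h+1}$.

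The one claim that does \emph{not} follow directly from the polynomial construction---and which I view as the main obstacle---is proving that \emph{every} proper face of $P^{Cy}(d,k)$ is a simplex, since for facets one has $2s>d$ and the construction above degenerates. My plan here is to invoke Gale's evenness condition, which characterises the facets of $P^{Cy}(d,k)$ as exactly those $d$-subsets $T$ of vertices such that for any two vertices outside $T$, the number of elements of $T$ lying strictly between them along the moment curve is even. From this characterisation, each facet has exactly $d$ vertices, which are affinely independent by Vandermonde, so every facet is a $(d-1)$-simplex; simpliciality of each lower-dimensional proper face is then automatic, because each such face is a face of some facet and every face of a simplex is a simplex. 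Since the evenness condition is developed in full in \cite[Section 4.7]{Grunbaum}, I would cite it rather than reprove it, consistent with the way the lemma is stated in the excerpt.
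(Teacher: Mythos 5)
Your proof is correct in substance, but it is worth noting that the paper does not actually prove this lemma at all: it is stated as a citation to \cite[Section 4.7]{Grunbaum}, so any comparison is really with the classical treatment of cyclic polytopes. Your argument is exactly that classical route (Vandermonde for affine independence, the squared polynomial $p(t)=\prod_{j}(t-t_{i_j})^2$ to exhibit a supporting hyperplane picking out $\conv(S)$), and it is sound.

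Two small remarks on your reasoning. First, you describe simpliciality as ``the main obstacle'' and reach for Gale's evenness condition, but this is more machinery than you need: the Vandermonde observation alone already gives it. Any $m\le d+1$ distinct points of the moment curve are affinely independent, so a face $F$ of dimension $j<d$ lies in a $j$-dimensional affine space and therefore cannot contain $j+2\le d+1$ vertices; hence $F$ has exactly $j+1$ affinely independent vertices and is a $j$-simplex. Gale's evenness is genuinely needed if one wants to \emph{enumerate} the facets, but not for simpliciality, and citing it here adds a dependency the lemma does not require. Second, your sentence deriving the count $\binom{k}{h+1}$ appeals to ``the obvious bijection'' between $(h+1)$-subsets and $h$-faces before you have argued surjectivity of the map $S\mapsto\conv(S)$; surjectivity is exactly the statement that each $h$-face has only $h+1$ vertices, i.e.\ simpliciality, which you only establish in the following paragraph. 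Reordering (or inserting the one-line Vandermonde argument for simpliciality before the count) would make the logic cleanly linear. Neither issue is a genuine gap, but both are worth tightening.
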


 Given   $P^{Cy}(d,k)$, $k\ge d+1$, apply a translation so that  $0\in \intr\left(P^{Cy}(d,k)\right)$. Let $D^{Cy}(d,k)$ be the polar of this translated polytope. Then by the above lemma and~\cite[Corollary 2.14]{Ziegler} we obtain:

\begin{remark}\label{re-polarcyclicpoly}  For  $k\ge d+1$, $0\in \intr\left(D^{Cy}(d,k)\right)$ and  $D^{Cy}(d,k)$ is a $d$-polytope with $k$ facets that is simple (i.e., every $h$-face, with $0\le h\le d-1$, is the intersection of exactly $d-h$ facets). For every subset $S$ of facets with $|S|\le \frac{d}{2}$, the intersection of the facets in $S$ is a $(d-|S|)$-face of $P$. So for $h\le \frac{d}{2}$, $P$ has $\binom{k}{h}$ $(d-h)$-faces.
\end{remark}

\subsection{A perturbation of the polar of a cyclic polytope}

Since for $k\ge d+1$, $0\in \intr\left(D^{Cy}(d,k)\right)$,  every valid inequality for  $D^{Cy}(d,k)$ can be written in the form $ax \leq 1$. Assume now $d$ even and $k=d^2$. Hence $D^{Cy}(d,d^2)$ has $d^2$ facets, and we  arbitrarily partition the normals to its facets into $d/2$ \emph{color classes} of size $2d$, so that $D^{Cy}(d,d^2)$ can be written as
\[\{x\in \R^d: a^i_jx\le 1, \,i=1,\dots, 2d,\, j=1,\dots, d/2\},\]
where for every $j$ the vectors $a_j^1,\dots,a_j^{2d}$ are the normals that belong to color class $j$.

By Lemma \ref{le-subspace}, there exists a linear subspace $V\subseteq \R^d$ such that $\dim(V)=d/2$ and
$$V\cap \aff\big(\overline{C}_{D^{C_y}(d,d^2)}(F)\big)=\{0\}$$
for each $(d/2)$-face $F$ of $D^{C_y}(d,d^2)$. Let  $u_1,\dots, u_{d/2}\in V$ be such that $\conv(u_1,\dots, u_{d/2})$ is a $(d/2-1)$-simplex and $0\in \relint(\conv(u_1,\dots, u_{d/2}))$. Note that the norms of vectors $u_i$ can be arbitrarily small.  Consider the following polytope:
\[Q^{Cy}(d,d^2):=\{x\in \R^d: (a^i_j+u_j )x\le 1, \,i=1\dots, 2d,\, j=1,\dots,d/2\}.\]

\begin{remark}\label{rem:perturb}
  Since, by Remark \ref{re-polarcyclicpoly}, $D^{Cy}(d,d^2)$ is a simple $d$-polytope,  $u_1,\dots, u_{d/2}$ can be scaled so that the combinatorial structures of $D^{Cy}(d,d^2)$ and $Q^{Cy}(d,d^2)$ coincide (see Section 2.5 in~\cite{Ziegler}). So the properties of Remark \ref{re-polarcyclicpoly} hold for $Q^{Cy}(d,d^2)$ as well, and there is an isomorphism between the face lattices of $D^{Cy}(d,d^2)$ and $Q^{Cy}(d,d^2)$.
\end{remark}

Call a $(d/2)$-face of $D^{Cy}(d,d^2)$ \emph{colorful} if it is the intersection of $d/2$ facets, no two of them having the same color. More precisely, a face $F$ is colorful if there exist indices $i_j,\, j=1,\dots,d/2$, such that:
\begin{equation}\label{eq-colorfulFD}
F=\big\{x\in D^{Cy}(d,d^2):a^{i_j }_jx=1,\,j=1,\dots,d/2\big\}.
\end{equation}

Given a colorful face $F$ described as above, let
\begin{equation}\label{eq-colorfulFQ}
F':=\big\{x\in Q^{Cy}(d,d^2):(a^{i_j }_j+u_j)x=1,\,j=1,\dots, d/2\big\}
\end{equation}
be the corresponding colorful face of $Q^{Cy}(d,d^2)$. Because of Remark \ref{rem:perturb}, $F'$ has dimension $d/2$.

\begin{lemma}\label{le-intoptcones}
Let $F$ and $F'$ be corresponding colorful  faces of $D^{Cy}(d,d^2)$ and  $Q^{Cy}(d,d^2)$ respectively. Then
\[C_{D^{Cy}(d,d^2)}(F)\cap C_{Q^{Cy}(d,d^2)}(F')=\{\lambda r,\, \lambda > 0\}\]
for some $r\in \R^d\setminus \{0\}$.
\end{lemma}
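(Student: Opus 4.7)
The plan is to compute the two optimality cones explicitly, show that their linear spans intersect in a line, and then verify that the intersection of their relative interiors is precisely the positive ray along an explicit generator.

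First I would use Remark \ref{re-optcones}(3) together with the fact that $D^{Cy}(d,d^2)$ is full-dimensional to identify
\[\overline{C}_{D^{Cy}(d,d^2)}(F)=\cone\bigl(a^{i_1}_1,\dots,a^{i_{d/2}}_{d/2}\bigr),\qquad \overline{C}_{Q^{Cy}(d,d^2)}(F')=\cone\bigl(a^{i_1}_1+u_1,\dots,a^{i_{d/2}}_{d/2}+u_{d/2}\bigr),\]
because the only facets of $D^{Cy}(d,d^2)$ (resp.\ $Q^{Cy}(d,d^2)$) containing $F$ (resp.\ $F'$) are those indicated in \eqref{eq-colorfulFD} (by simplicity of these polytopes, Remark \ref{re-polarcyclicpoly} and Remark \ref{rem:perturb}). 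Simplicity also gives that $a^{i_1}_1,\dots,a^{i_{d/2}}_{d/2}$ are linearly independent (the intersection of these $d/2$ facets is the $(d/2)$-face $F$, so the corresponding optimality cone has dimension $d/2$ by Remark \ref{re-optcones}(5)); the same holds for the perturbed normals. Hence both cones are simplicial of dimension $d/2$, and by Remark \ref{re-optcones}(4) their relative interiors are $C_{D^{Cy}(d,d^2)}(F)=\{\sum_j\alpha_j a^{i_j}_j:\alpha_j>0\}$ and $C_{Q^{Cy}(d,d^2)}(F')=\{\sum_j\beta_j(a^{i_j}_j+u_j):\beta_j>0\}$.

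Next I would compute $L_1\cap L_2$ where $L_1:=\aff(\overline{C}_{D^{Cy}(d,d^2)}(F))$ and $L_2:=\aff(\overline{C}_{Q^{Cy}(d,d^2)}(F'))$. A vector $\sum_j\beta_j(a^{i_j}_j+u_j)\in L_2$ lies in $L_1$ iff $\sum_j\beta_j u_j\in L_1$. Since $\sum_j\beta_j u_j\in V$ and the construction of $V$ (via Lemma \ref{le-subspace}) guarantees $V\cap L_1=\{0\}$, this forces $\sum_j\beta_j u_j=0$. The map $\beta\mapsto\sum_j\beta_j u_j$ from $\R^{d/2}$ to $V$ has image equal to the linear span of $u_1,\dots,u_{d/2}$, which has dimension $d/2-1$ because $\conv(u_1,\dots,u_{d/2})$ is a $(d/2-1)$-simplex containing $0$ in its relative interior; so its kernel is one-dimensional. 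Moreover, the condition $0\in\relint\conv(u_1,\dots,u_{d/2})$ means that this kernel is spanned by a vector $(\mu_1,\dots,\mu_{d/2})$ with all $\mu_j>0$ (any other element of the kernel is a scalar multiple).

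Now set $r:=\sum_j\mu_j a^{i_j}_j$. Since $\mu_j>0$ for all $j$ and $a^{i_1}_1,\dots,a^{i_{d/2}}_{d/2}$ are linearly independent, $r\neq 0$ and $r\in C_{D^{Cy}(d,d^2)}(F)$. Using $\sum_j\mu_j u_j=0$ we also have $r=\sum_j\mu_j(a^{i_j}_j+u_j)$, so $r\in C_{Q^{Cy}(d,d^2)}(F')$. By the previous paragraph, every element of $L_1\cap L_2$ is of the form $\lambda r$ for some $\lambda\in\R$, and for $\lambda\le 0$ the unique expansion in the basis $a^{i_1}_1,\dots,a^{i_{d/2}}_{d/2}$ has non-positive coefficients, hence fails to lie in the relative interior of $\overline{C}_{D^{Cy}(d,d^2)}(F)$. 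Combining with $C_{D^{Cy}(d,d^2)}(F)\cap C_{Q^{Cy}(d,d^2)}(F')\subseteq L_1\cap L_2$, the desired equality $C_{D^{Cy}(d,d^2)}(F)\cap C_{Q^{Cy}(d,d^2)}(F')=\{\lambda r:\lambda>0\}$ follows.

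The main obstacle is the bookkeeping in the third paragraph: one must verify both that $r$ actually lies in the \emph{relative interior} of each cone (not merely in the closure) and that negative multiples of $r$ are excluded. Both points depend crucially on the positivity of the coefficients $\mu_j$, which in turn depends on $0\in\relint\conv(u_1,\dots,u_{d/2})$, and on the linear independence of the colorful normals, which comes from simplicity of the polytopes.
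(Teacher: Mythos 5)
Your proof is correct and takes essentially the same approach as the paper: identify the closed optimality cones via facet normals, use the genericity of $V$ from Lemma \ref{le-subspace} to force $\sum_j\beta_j u_j=0$, and use $0\in\relint\conv(u_1,\dots,u_{d/2})$ to pin down a unique positive solution giving the generator $r$. The only stylistic difference is that you pass through the affine hulls $L_1\cap L_2$ before restricting to relative interiors, while the paper works directly with the intersection of the closed cones via the system \eqref{eq-intC}; the underlying computation is the same.
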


\begin{proof}
Assume that $F$ is given as in \eqref{eq-colorfulFD}. Since $D^{Cy}(d,d^2)$ and $Q^{Cy}(d,d^2)$ are $d$-polytopes by Remarks \ref{re-polarcyclicpoly} and \ref{rem:perturb}, by Remark \ref{re-optcones} we have that
\begin{equation}\label{eq-conerelations1}
\overline{C}_{D^{Cy}(d,d^2)}(F)=\cone\big(a^{i_j }_j,\,j=1,\dots, d/2\big),\quad \overline{C}_{Q^{Cy}(d,d^2)}(F')=\cone\big(a^{i_j }_j+u_j,\,j=1,\dots, d/2\big).
\end{equation}
Again by Remark \ref{re-optcones}, $\overline{C}_{D^{Cy}(d,d^2)}(F)$ and $\overline{C}_{Q^{Cy}(d,d^2)}(F')$ are pointed cones.   This shows that each point from $\left(\overline{C}_{D^{Cy}(d,d^2)}(F)\cap \overline{C}_{Q^{Cy}(d,d^2)}(F')\right)\setminus\{0\}$ corresponds to a solution to the system
\begin{equation}\label{eq-intC}
\sum_{j=1}^{d/2}a^{i_j }_j\mu_j=\sum_{j=1}^{d/2}(a^{i_j }_j+u_j)\nu_j,\, \mu_j\ge 0,\,\nu_j\ge 0,\,j=1\dots, d/2
\end{equation}
where the $\mu_j$'s are not all equal to $0$ and the $\nu_j$'s are not all equal to $0$.

Since $u_1,\dots,u_{d/2}$ belong to  $V$  and $V\cap \aff\left(\overline{C}_P(F)\right)=\{0\}$ by Lemma \ref{le-subspace}, every solution to the system $\sum_{j=1}^{d/2}a^{i_j }_j\mu_j=\sum_{j=1}^{d/2}(a^{i_j }_j+u_j)\nu_j$ must satisfy
\[\sum_{j=1}^{d/2}u_j\nu_j=0.\]
Since by construction $\conv(u_1,\dots, u_{d/2})$ is a $(d/2-1)$-simplex and $0\in \relint(\conv(u_1,\dots, u_{d/2}))$, the system
\[\sum_{j=1}^{d/2}u_j\nu_j=0,\,\, \nu_j\ge 0,\,j=1\dots,d/2\]
admits a unique (up to scaling) nonzero solution $\bar\nu_j$, and furthermore $\bar\nu_j>0,\,j=1\dots, d/2$.

Therefore the system \eqref{eq-intC} admits a unique (again, up to scaling) solution $\bar{\mu}, \bar{\nu}$, and this solution satisfies $\bar\mu_j= \bar\nu_j>0,\,j=1\dots,d/2$.

By Remark \ref{re-optcones}, we have that
\begin{equation}\label{eq-conerelations2}
 C_{D^{Cy}(d,d^2)}(F)=\relint\left(\overline{C}_{D^{Cy}(d,d^2)}(F)\right)\; \mbox{ and }\; C_{Q^{Cy}(d,d^2)}(F')=\relint\left(\overline{C}_{Q^{Cy}(d,d^2)}(F')\right).
\end{equation}
Let $r:=\sum_{j=1}^{d/2}a^{i_j}_j\bar\mu_j$.
Since $ \bar\mu_j>0$ for $j=1\dots,d/2$, we have that $r\in \relint\left(\overline{C}_{D^{Cy}(d,d^2)}(F)\right)\cap \relint\left(\overline{C}_{Q^{Cy}(d,d^2)}(F')\right)$. Then by \eqref{eq-conerelations2} we have that $C_{D^{Cy}(d,d^2)}(F)\cap C_{Q^{Cy}(d,d^2)}(F')=\{\lambda r, \,\lambda > 0\}$.
\end{proof}

\section{A polyhedral construction}\label{sec:construction}

Let $P_0,\,P_1 \subseteq \R^{d-1}$ be $(d-1)$-polytopes. The  \emph{Cayley embedding} \cite{HRS} of $P_0$ and $P_1$ is the $d$-polytope
$$\conv\left(\begin{pmatrix}P_0 \\ 0 \end{pmatrix}\cup\begin{pmatrix}P_1 \\ 1 \end{pmatrix}\right)\subseteq\R^d,$$
where for a set $S\subseteq \R^{d-1}$ we define notation
$\begin{pmatrix}S \\ \alpha \end{pmatrix}:=\left\{\begin{pmatrix}x \\ \alpha \end{pmatrix}: x\in S\right\}$.

Note that given $x\in\R^{d-1}$, the point $(x,1/2)$ belongs to the Cayley embedding of $P_0$ and $P_1$ if and only if $x\in\frac12P_0+\frac12P_1$.
Some extremal properties of the facial structure of the Minkowski sum of polytopes have been investigated, e.g., in \cite{Sanyal,Weibel}.
However, to the best of our knowledge the construction below is new.

\begin{remark}\label{re-pointinCay}
Let $P_0,P_1 \subseteq \R^{d-1}$ be $(d-1)$-polytopes and $P$ be the Cayley embedding of $P_0$ and $P_1$. Given $F_0\in \mathcal{F}(P_0)$ and $F_1\in \mathcal{F}(P_1)$, let  $F$  be the Cayley embedding of $F_0$ and $F_1$. Then,  given $x\in \R^d$, we have that $x\in F$ if and only if $0\le x_d\le 1$ and there exist $x^0\in \begin{pmatrix}F_0 \\ 0 \end{pmatrix}$ and $x^1\in \begin{pmatrix}F_1 \\ 1 \end{pmatrix}$ such that  $x=(1-x_d)x^0+x_dx^1$ (where $x_d$ is the last component of $x$).
\end{remark}

\begin{lemma}\label{lem:FacesCayley}
Let $P_0,P_1 \subseteq \R^{d-1}$ be $(d-1)$-polytopes and $P$ be the Cayley embedding of $P_0$ and $P_1$. Given $F_0\in \mathcal{F}(P_0)$ and $F_1\in \mathcal{F}(P_1)$, let  $F$  be the Cayley embedding of $F_0$ and $F_1$. Then $F$ is a face of $P$ if and only if $C_{P_0}(F_0)\cap C_{P_1}(F_1)\ne\emptyset$. Furthermore, in this case, given $(r,\alpha)\in\R^{d-1}\times\R$ we have that $(r,\alpha)\in C_P(F)$ if and only if $r\in C_{P_0}(F_0)\cap C_{P_1}(F_1)$ and $\alpha=
\max\{rx:x\in P_0\}-\max\{rx:x\in P_1\}$.
\end{lemma}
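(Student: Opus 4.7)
The plan is to prove the ``furthermore'' clause as a standalone statement; the face characterization in the first part then follows by existentially quantifying over $(r,\alpha)$, since $F$ is a face of $P$ iff there exists a triple $(r,\alpha,\delta)$ such that $rx+\alpha x_d\le\delta$ is valid on $P$ with equality set $F$. Throughout, abbreviate $\delta_i:=\max\{rx:x\in P_i\}$ for $i\in\{0,1\}$, so $r\in C_{P_i}(F_i)$ means exactly that $rx\le\delta_i$ defines $F_i$ in $P_i$.

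For the necessity of the ``furthermore'', suppose $rx+\alpha x_d\le\delta$ is valid on $P$ with equality set $F$. I would restrict the inequality to the two ``floors'' $x_d=0$ and $x_d=1$ of the Cayley embedding. On $\{x_d=0\}\cap P$ the inequality becomes $rx\le\delta$ for $x\in P_0$, with equality set $\{x\in P_0:(x,0)\in F\}$. Using Remark~\ref{re-pointinCay} together with the fact that $F_1$ is nonempty, I would argue this set is exactly $F_0$: the decomposition forced by $x_d=0$ gives $x=x^0\in F_0$, and conversely any $x\in F_0$ extends (by padding with any element of $F_1$) to a point of $F$ at level $0$. This yields $r\in C_{P_0}(F_0)$ and $\delta=\delta_0$. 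The symmetric argument at $x_d=1$ gives $r\in C_{P_1}(F_1)$ and $\delta-\alpha=\delta_1$, hence $\alpha=\delta_0-\delta_1$.

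For the sufficiency, I would fix $r\in C_{P_0}(F_0)\cap C_{P_1}(F_1)$, set $\alpha:=\delta_0-\delta_1$ and $\delta:=\delta_0$, and show that $rx+\alpha x_d\le\delta_0$ is valid on $P$ and has equality set $F$. Writing a generic $(x,x_d)\in P$ via Remark~\ref{re-pointinCay} as $(1-x_d)(x^0,0)+x_d(x^1,1)$ with $x^0\in P_0$, $x^1\in P_1$, $x_d\in[0,1]$, a direct computation gives
\[
rx+\alpha x_d=(1-x_d)rx^0+x_d(rx^1+\alpha)\le(1-x_d)\delta_0+x_d\delta_1+x_d(\delta_0-\delta_1)=\delta_0,
\]
proving validity. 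The inclusion $F\subseteq\{rx+\alpha x_d=\delta_0\}$ is immediate: any $(x,x_d)\in F$ admits, by Remark~\ref{re-pointinCay}, a decomposition with $x^0\in F_0$, $x^1\in F_1$, which turns both estimates above into equalities. For the reverse inclusion, equality in the display is equivalent to the pair $(1-x_d)(\delta_0-rx^0)=0$ and $x_d(\delta_1-rx^1)=0$ (both summands are nonnegative). In the generic case $0<x_d<1$ this forces $x^0\in F_0$ and $x^1\in F_1$, so $(x,x_d)\in F$; in the boundary cases $x_d\in\{0,1\}$ one uses the nonemptiness of the ``other'' face to pad the decomposition and again land in $F$.

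The step I expect to require the most care is the identification of the equality set of the candidate inequality with $F$ at the boundary values $x_d\in\{0,1\}$, where the Cayley decomposition of a point is not unique and hence a generic decomposition need not by itself witness membership in $F$. The fix is to exploit the hypothesis that both $F_0$ and $F_1$ are nonempty: this always allows one to replace the irrelevant coordinate by an arbitrary element of the opposite face, which both completes the proof and makes transparent why the lemma is phrased with $F_0,F_1\in\mathcal{F}(P_0),\mathcal{F}(P_1)$ (nonempty faces).
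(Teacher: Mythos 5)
Your proof is correct, and it diverges from the paper's in a genuinely useful way in the necessity (``only if'') direction. The paper proves the identity $\alpha=\delta_0-\delta_1$ directly but establishes $r\in C_{P_0}(F_0)\cap C_{P_1}(F_1)$ indirectly: it invokes item~1 of Remark~\ref{re-optcones} to produce the unique faces $F_0^*,F_1^*$ whose optimality cones contain $r$, applies the already-proved ``if'' direction to get $(r,\alpha)\in C_P(F^*)$ for the Cayley embedding $F^*$ of $F_0^*,F_1^*$, and then derives a contradiction with the partition property of the cones $C_P(\cdot)$ if $F^*\ne F$. Your argument is more direct and avoids this detour: restricting the defining inequality $rx+\alpha x_d\le\delta$ to the two floors $\{x_d=0\}\cap P=\bigl(\begin{smallmatrix}P_0\\0\end{smallmatrix}\bigr)$ and $\{x_d=1\}\cap P=\bigl(\begin{smallmatrix}P_1\\1\end{smallmatrix}\bigr)$ immediately yields inequalities defining $F_0$ in $P_0$ and $F_1$ in $P_1$, reading off $r\in C_{P_0}(F_0)\cap C_{P_1}(F_1)$, $\delta=\delta_0$ and $\delta-\alpha=\delta_1$ in one stroke, with the nonemptiness of $F_0,F_1$ used only to justify $F\cap\{x_d=0\}=\bigl(\begin{smallmatrix}F_0\\0\end{smallmatrix}\bigr)$ and symmetrically at level~$1$. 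The sufficiency direction is essentially the paper's computation in \eqref{eq-pointinCay}, except that you make fully explicit the verification that the equality set of the candidate inequality is exactly $F$, splitting the interior case $0<x_d<1$ from the boundary cases $x_d\in\{0,1\}$ where the Cayley decomposition is degenerate; the paper compresses this step to ``satisfied at equality if and only if $x\in F$.'' Your version is more elementary (no cone-partitioning appeal in the necessity direction) and a bit more careful at the boundary; the paper's version gets extra mileage out of Remark~\ref{re-optcones} and is correspondingly terser.
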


\begin{proof} By Remark \ref{re-pointinCay} we have that given $x\in \R^d$,  $x\in P$ if and only if $0\le x_d\le 1$ and there exist $x^0\in \begin{pmatrix}P_0 \\ 0 \end{pmatrix}$ and $x^1\in \begin{pmatrix}P_1 \\ 1 \end{pmatrix}$ such that  $x=(1-x_d)x^0+x_dx^1$. Therefore, given $(r,\gamma)\in \R^{d-1}\times \R$ and $x\in P$, we have that
\begin{equation}\label{eq-pointinCay}
  (r,\gamma)x=(r,\gamma)((1-x_d)x^0+x_dx^1)=(1-x_d)(r,0)x^0+ x_d((r,0)x_1+\gamma)\le (1-x_d)\alpha_0+ x_d(\alpha_1+\gamma),
\end{equation}
where  $\beta_0:=\max\{rx:\,x\in P_0\}$, $\beta_1:=\max\{rx:\,x \in P_1\}$.

Assume now $r\in C_{P_0}(F_0)\cap C_{P_1}(F_1)$, i.e., $rx\le \beta_0$ defines $F_0$ and $rx\le \beta_1$ defines $F_1$. Then if we let  $\gamma=\alpha=\beta_0- \beta_1$, we have that by \eqref{eq-pointinCay} and Remark \ref{re-pointinCay},  the inequality $(r,\gamma)x\le \beta_0$ is valid for $P$ and is satisfied at equality if and only if $x\in F$. Therefore  $F$ is a face of $P$ and  $(r,\alpha)\in C_P(F)$. This proves the ``if'' direction of both equivalences in the statement.
\smallskip

Assume now that $F$ is a face of $P$. Take $(r,\alpha)\in C_P(F)$ and let $\beta$ be such that $(r,\alpha)x\le \beta$ defines $F$.
Then
$$\beta\ge \max\{rx:x\in P_0\}\mbox{ and }\beta-\alpha\ge \max\{rx:x\in P_1\}.$$
Furthermore since  $(r,\alpha)\in C_P(F)$, $F$ is the Cayley embedding of $F_0$, $F_1$, and $F_0$, $F_1$ are both nonempty, the above two inequalities are satisfied at equality. This shows $\alpha=
\max\{rx:x\in P_0\}-\max\{rx:x\in P_1\}$.

We finally show $r\in C_{P_0}(F_0)\cap C_{P_1}(F_1)$. Let $F^*_0$,   $F^*_1$  be the faces of $P_0$, $P_1$ such that $r\in C_{P_0}(F^*_0)\cap C_{P_1}(F^*_1)$ (the existence of $F^*_0$,   $F^*_1$ is guaranteed by 1.\ of Remark \ref{re-optcones}). Assume $F_0\ne F^*_0$ or $F_1\ne F^*_1$, and let $F^*$ be the Cayley embedding of  $F^*_0$,   $F^*_1$. Then $F^*\ne F$ and by the ``if'' part of the lemma, $(r,\alpha)\in C_P(F^*)$. Therefore  $(r,\alpha)\in C_P(F)\cap C_P(F^*)$, a contradiction to  1.\ of Remark \ref{re-optcones}, and this concludes the proof of ``only if'' part.
\end{proof}

Now Remark \ref{re-optcones} and Lemma \ref{lem:FacesCayley} imply the following:

\begin{cor}\label{cor:FacetsCayley}
Let $P_0,P_1 \subseteq \R^{d-1}$ be $(d-1)$-polytopes and $P$ be the Cayley embedding of $P_0$ and $P_1$. Given $F_0\in \mathcal{F}(P_0)$ and $F_1\in \mathcal{F}(P_1)$, let  $F$  be the Cayley embedding of $F_0$ and $F_1$. Then $F$ is a facet of $P$ if and only if $C_{ P_0}(F_0)\cap C_{P_1}(F_1)=\{\lambda r,\, \lambda > 0\}$ for some $r\in \R^d\setminus \{0\}$.
\end{cor}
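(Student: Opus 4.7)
My plan is to use Lemma \ref{lem:FacesCayley} to translate the condition ``$F$ is a facet of $P$'' into a dimension statement about $C_{P_0}(F_0)\cap C_{P_1}(F_1)$. First, I would observe that $P$ is itself a $d$-polytope: its image under the projection onto the last coordinate is $[0,1]$, and its slices at $x_d=0$ and $x_d=1$ are translates of the full-dimensional polytopes $P_0,P_1\subseteq\R^{d-1}$. Therefore 5.\ of Remark \ref{re-optcones} applied to $P$ yields $\dim(G)+\dim(C_P(G))=d$ for every face $G$ of $P$, so $F$ is a facet of $P$ if and only if $F$ is a face of $P$ with $\dim(C_P(F))=1$.

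Next, I would compute $\dim(C_P(F))$ from the explicit description in Lemma \ref{lem:FacesCayley}: $F$ is a face of $P$ iff $C_{P_0}(F_0)\cap C_{P_1}(F_1)\ne\emptyset$, and in that case $C_P(F)$ consists of the pairs $(r,\alpha)$ such that $r\in C_{P_0}(F_0)\cap C_{P_1}(F_1)$ and $\alpha=\max\{rx:x\in P_0\}-\max\{rx:x\in P_1\}$. Fix any $x_0\in F_0$ and $x_1\in F_1$; for $r$ in the intersection the two maxima are attained at $x_0$ and $x_1$ respectively, so $\alpha=r(x_0-x_1)$ depends linearly on $r$. Thus $(r,\alpha)\mapsto r$ is a linear bijection from $C_P(F)$ onto $C_{P_0}(F_0)\cap C_{P_1}(F_1)$, and the two sets have the same dimension. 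Combining this with the previous paragraph, $F$ is a facet of $P$ if and only if $C_{P_0}(F_0)\cap C_{P_1}(F_1)$ is one-dimensional.

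To conclude, I would verify that such a one-dimensional intersection is necessarily an open ray $\{\lambda r:\lambda>0\}$ rather than an entire line through the origin. The intersection is invariant under multiplication by positive scalars (directly from the definition of $C_{P_i}(F_i)$, by replacing $c,\delta$ with $\lambda c,\lambda\delta$), and it is contained in the polyhedral cone $\overline{C}_{P_0}(F_0)\cap\overline{C}_{P_1}(F_1)$, which is pointed because each $\overline{C}_{P_i}(F_i)$ is: if both $c$ and $-c$ belonged to $\overline{C}_{P_i}(F_i)$, then $cx$ would simultaneously attain its maximum and its minimum on $F_i$, hence be constant on $P_i$, forcing $c=0$ by full-dimensionality of $P_i$. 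A one-dimensional subset of a pointed cone invariant under positive scaling is an open ray, which gives the required form of the intersection.

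The step I expect to be the main obstacle is the last one: once Lemma \ref{lem:FacesCayley} is in hand, the bijection argument is essentially a calculation, but the passage from ``$1$-dimensional intersection'' to ``single open ray'' requires one to carefully exploit both the relative openness of $C_{P_i}(F_i)$ and the pointedness of its closure $\overline{C}_{P_i}(F_i)$.
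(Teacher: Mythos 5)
Your proof is correct and fills in exactly the argument the paper leaves implicit when it asserts that the corollary follows from Remark \ref{re-optcones} and Lemma \ref{lem:FacesCayley}: part 5 of the remark reduces ``$F$ is a facet'' to $\dim\bigl(C_P(F)\bigr)=1$, the explicit description in Lemma \ref{lem:FacesCayley} gives the dimension-preserving correspondence $(r,\alpha)\leftrightarrow r$, and parts 2--4 of the remark give the pointedness and positive-homogeneity needed to turn ``one-dimensional'' into ``single open ray.'' No genuine difference in approach; this is the intended derivation.

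One small remark on presentation (not a gap): when you invoke pointedness, it is cleaner to note explicitly that $0\in C_{P_i}(F_i)$ forces $F_i=P_i$ (since $C_{P_i}(P_i)=\{0\}$ for a full-dimensional $P_i$), so a one-dimensional intersection $C_{P_0}(F_0)\cap C_{P_1}(F_1)$ cannot contain the origin; together with invariance under positive scaling and the fact that its affine hull is a line through $0$, this yields that the set equals the full open ray $\{\lambda r:\lambda>0\}$. Also, the paper's statement writes $r\in\R^d\setminus\{0\}$, but since the intersection lives in $\R^{d-1}$ the vector $r$ should be read as lying in $\R^{d-1}$; your proof handles this correctly.
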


We now can provide a constructive proof of the following:

\begin{theorem}\label{th-construction}
  For  every even $d \ge 2$ there exists a $(d+1)$-polytope having  $d^{\Omega(d)}$ facets which is the Cayley embedding of $d$-polytopes $P_1$ and $P_2$, each having $ d^2$ facets. \end{theorem}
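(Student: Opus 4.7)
The construction will be the Cayley embedding $P$ of $P_1 := D^{Cy}(d,d^2)$ and $P_2 := Q^{Cy}(d,d^2)$, the two $d$-polytopes built in Section \ref{sec:tools}. By Remarks \ref{re-polarcyclicpoly} and \ref{rem:perturb}, each of $P_1$ and $P_2$ has exactly $d^2$ facets, so $P$ is the Cayley embedding of two $d$-polytopes with $d^2$ facets; it remains to exhibit $d^{\Omega(d)}$ facets of $P$.

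The idea is to produce one facet of $P$ for every colorful $(d/2)$-face of $P_1$ via Corollary \ref{cor:FacetsCayley}. Fix a choice of one facet from each of the $d/2$ color classes of $P_1$. Because $P_1$ is simple (Remark \ref{re-polarcyclicpoly}), the intersection of these $d/2$ facets is a colorful $(d/2)$-face $F_0$. Let $F_1$ be the corresponding colorful face of $P_2$ obtained by replacing each chosen normal $a^{i_j}_j$ by $a^{i_j}_j + u_j$; by Remark \ref{rem:perturb}, $F_1$ is a genuine $(d/2)$-face of $P_2$. Lemma \ref{le-intoptcones} then asserts that $C_{P_1}(F_0)\cap C_{P_2}(F_1)$ is a single open ray, so by Corollary \ref{cor:FacetsCayley} the Cayley embedding of $F_0$ and $F_1$ is a facet of $P$. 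Distinct colorful choices yield distinct pairs $(F_0,F_1)$ and hence distinct facets of $P$: by Remark \ref{re-pointinCay}, intersecting a Cayley face with the hyperplane $x_{d+1}=0$ (resp.\ $x_{d+1}=1$) recovers $F_0$ (resp.\ $F_1$). Counting, the number of colorful pairs is $(2d)^{d/2} = 2^{d/2} d^{d/2} = d^{\Omega(d)}$, as required.

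The conceptual obstacle---that the perturbed colorful normals generate optimality cones whose intersection is a single ray---has already been handled in Lemma \ref{le-intoptcones}, the technical heart of the argument. What remains is essentially bookkeeping: verifying that the perturbation vectors $u_j$ can be taken small enough to preserve the combinatorial type of $D^{Cy}(d,d^2)$ in $Q^{Cy}(d,d^2)$ (Remark \ref{rem:perturb}), and that distinct colorful pairs really do give distinct Cayley facets of $P$. Both are routine consequences of the tools assembled in Sections \ref{sec:tools} and at the start of Section \ref{sec:construction}, so I expect no additional difficulty beyond carefully recording this counting argument.
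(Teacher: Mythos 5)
Your proof is correct and follows essentially the same approach as the paper's: the Cayley embedding of $D^{Cy}(d,d^2)$ and its perturbation $Q^{Cy}(d,d^2)$, with one facet produced per colorful $(d/2)$-face via Lemma \ref{le-intoptcones} and Corollary \ref{cor:FacetsCayley}, then a count of $(2d)^{d/2}$ colorful choices. Your explicit distinctness argument (slicing at $x_{d+1}=0$ and $x_{d+1}=1$) is a minor elaboration of what the paper states via Remark \ref{re-polarcyclicpoly}, but there is no substantive difference.
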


  \begin{proof}  Let $d\ge 2$ be even and fix a coloring of the facets of $D^{Cy}(d,d^2)$.
  Let $F$ be a   colorful  face of  $D^{Cy}(d,d^2)$ and $F'$ be the corresponding face of   $Q^{Cy}(d,d^2)$. By Lemma \ref{le-intoptcones} we have that
$C_{ D^{Cy}(d,d^2)}(F)\cap C_{ Q^{Cy}(d,d^2)}(F')=\{\lambda r,\, \lambda > 0\}$ for some $r\in \R^d\setminus \{0\}$. By Corollary \ref{cor:FacetsCayley}, the Cayley embedding of $F$, $F'$ is a facet of the Cayley embedding of  $D^{Cy}(d,d^2)$ and $Q^{Cy}(d,d^2)$.

By Remark \ref{re-polarcyclicpoly},  the intersection of every $d/2$ facets of $D^{Cy}(d,d^2)$ forms a distinct face. By  definition, the number of colorful $(d/2)$-faces of $D^{Cy}(d,d^2)$ is $(2d)^{d/2}=d^{\Omega(d)}$.  Therefore the Cayley embedding of  $P_1:=D^{Cy}(d,d^2)$ and $P_2:=Q^{Cy}(d,d^2)$ has $d^{\Omega(d)}$ facets. Since $P_1$ and $P_2$ have $d^2$ facets each, this proves the theorem.
\end{proof}

 As shown in Section \ref{sec:strategy}, the above theorem implies Theorem \ref{thr:main}.

\section{Proof of Theorem \ref{thr:main-approx}}\label{sec:Proof-of-approx}

The $d$-dimensional \emph{cross-polytope} is (see e.g.~\cite{Ziegler}):
$$Q_d^\triangle:=\Bigg\{x \in \R^d : \sum_{i \in I} x_i - \sum_{i \in [d]\setminus I}x_i \leq 1 :\ \forall\, I \subseteq [d]\Bigg\},$$
with $[d]:=\{1,\dots,d\}$. $Q_d^\triangle$ has $2^d$ facets, as  every inequality in the above description defines a facet.

Consider the following  $(d-1)$-simplices:
$$ P_1:=\left\{x\in [0,1]^d:\,\sum_{i=1}^{d}x_i=1\right\},\;\;\;P_{-1}:=\left\{x\in [-1,0]^d:\,\sum_{i=1}^{d}x_i=-1\right\}.$$
Since $Q_d^\triangle$ has $2d$ vertices, namely  $\pm e_i$ for $i=1,\dots,d$ (the unit vectors and their negatives), it follows that $Q_d^\triangle=\conv(P_1\cup P_{-1})$. Therefore  $Q_d^\triangle$ is a $d$-polytope with $2^d$ facets which is the convex hull of two of its facets that are $(d-1)$-simplices. (This  choice is not unique: any two parallel facets of $Q_d^\triangle$ will do.)

We show that, for every constant $\varepsilon>0$, any $\varepsilon$-approximation of the cross-polytope must still have an exponential number of facets. We will then invoke Lemma \ref{lem:hammer} to conclude the proof of Theorem \ref{thr:main-approx}.

The following observation allows us to focus on $\varepsilon$-approximations that only use facet-defining inequalities.

\begin{lemma}\label{lem:cara}
	Let $Q\subseteq \R^d$ be an $\varepsilon$-approximation of a $d$-polytope $P\subseteq \R^d$ for some $\varepsilon >0$. Then there exists an $\varepsilon$-approximation of $P$ with at most $d\cdot f(Q)$ inequalities, each of which defines a facet of $P$.
\end{lemma}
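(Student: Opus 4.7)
The plan is to construct the desired $\varepsilon$-approximation $Q'$ by replacing each facet inequality of $Q$ with a small number of facet-defining inequalities of $P$, chosen via a Carath\'eodory-type decomposition inside the optimality cones studied in Section~\ref{sec:tools}. The guiding identity is that if $F^*$ is the face of $P$ maximizing a given linear functional $a$, then $a$ lies in $\overline{C}_P(F^*)$, which is generated by the outer normals of the facets of $P$ containing $F^*$.

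More concretely, I would first fix an arbitrary facet inequality $a^\top x \le \beta$ of $Q$. Since $P \subseteq Q$, the quantity $\beta^* := \max\{a^\top x : x \in P\}$ satisfies $\beta^* \le \beta$, and the corresponding face $F^* := \{x \in P : a^\top x = \beta^*\}$ lies in $\mathcal{F}(P)$. By definition $a \in C_P(F^*) \subseteq \overline{C}_P(F^*)$, so Carath\'eodory's theorem in $\R^d$ yields nonnegative coefficients $\lambda_1,\dots,\lambda_d \ge 0$ and facet-defining inequalities $a_i^\top x \le \beta_i$ of $P$, each defining a facet containing $F^*$, such that $a = \sum_{i=1}^{d} \lambda_i a_i$. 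The crucial computation is that since every point of $F^*$ satisfies $a_i^\top x = \beta_i$, we have
\[
\sum_{i=1}^{d} \lambda_i \beta_i \;=\; \sum_{i=1}^{d} \lambda_i a_i^\top x \;=\; a^\top x \;=\; \beta^* \qquad \text{for any } x \in F^*.
\]
Therefore the inequality $a^\top x \le \beta^*$ is a nonnegative combination of the facet inequalities $a_i^\top x \le \beta_i$.

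Next I would let $Q'$ be the polytope defined by the union of all facet-defining inequalities $a_i^\top x \le \beta_i$ obtained as above, over all facets of $Q$. By construction $Q'$ has at most $d\cdot f(Q)$ inequalities, each defining a facet of $P$. Since every inequality used is valid for $P$, we get $P \subseteq Q'$. Moreover, for every facet inequality $a^\top x \le \beta$ of $Q$, the combination shows $a^\top x \le \beta^* \le \beta$ is valid for $Q'$, so $Q' \subseteq Q$. The sandwich $P \subseteq Q' \subseteq Q$ immediately gives, for every $c \in \R^d$,
\[
\max_{x\in Q'} c^\top x - \min_{x\in Q'} c^\top x \;\le\; \max_{x\in Q} c^\top x - \min_{x\in Q} c^\top x \;\le\; (1+\varepsilon)\Big(\max_{x\in P} c^\top x - \min_{x\in P} c^\top x\Big),
\]
so $Q'$ is an $\varepsilon$-approximation of $P$ with the required properties.

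The only delicate point, and the one worth double-checking during write-up, is the Carath\'eodory step: we need $a$ to truly lie in the conic hull of the facet normals containing $F^*$, not just in some larger cone. This is guaranteed by item 3 of Remark~\ref{re-optcones}, which characterizes $\overline{C}_P(F^*)$ as exactly that cone (using that $P$ is full-dimensional, so the subspace $C_P(P) = \{0\}$ contributes nothing). Everything else is a routine combination of the width inequality and the monotonicity of max/min under set inclusion.
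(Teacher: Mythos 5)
Your proof is correct and follows the same strategy as the paper's: decompose each facet normal of $Q$ via Carath\'eodory's theorem into a conic combination of at most $d$ facet normals of $P$, verify that the right-hand sides combine correctly, and conclude via $P\subseteq Q'\subseteq Q$. Your write-up is more explicit than the paper's (which is quite terse) in two welcome ways: you spell out why the right-hand side $\beta^*=\sum_i\lambda_i\beta_i$ works out, and you correctly state the sandwich as $P\subseteq Q'\subseteq Q$ (the paper's proof has the inclusions reversed, an evident typo).
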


\begin{proof}
	Let $cx \leq \delta$ be any facet-defining inequality for $Q$. Then $cx \leq \delta$ is valid for $P$ and without loss of generality we may assume that $cx \leq \delta$ is supporting for $P$. Hence, by Caratheodory's theorem, it is a conic combination of at most $d$ facet-defining inequalities for $P$. Hence, we can replace $cx \leq \delta$ with the facet-defining inequalities for $P$ that define it and obtain a polytope $P'$ such that $P\supseteq P'\supseteq Q$. By repeating the procedure for all facet-defining inequalities for $Q$, we obtain the claimed result.
\end{proof}

\begin{lemma}\label{lem:approx-cross}
Given $\varepsilon>0$, there exists $\kappa>1$ such that every $\varepsilon$-approximation of $Q_d^\triangle$ has $\Omega(\kappa^d)$ facets.
\end{lemma}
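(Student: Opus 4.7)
The plan is to reduce, via Lemma \ref{lem:cara}, to the case where the approximating polytope uses only facet-defining inequalities of $Q_d^\triangle$, then translate the $\varepsilon$-approximation condition into a covering-code property on the Boolean hypercube, and finally invoke the sphere-covering bound. Write the facets of $Q_d^\triangle$ as $c_I \cdot x \le 1$ for $I \in 2^{[d]}$, where $c_I := \sum_{i \in I} e_i - \sum_{i \notin I} e_i \in \{-1,+1\}^d$. After applying Lemma \ref{lem:cara} (at the cost of a factor $d$ in the facet count) we may assume $P' = \{x : c_I \cdot x \le 1,\ I \in \mathcal{I}\}$ for some $\mathcal{I} \subseteq 2^{[d]}$, so it suffices to show $|\mathcal{I}| \ge \kappa^d$ for some $\kappa > 1$ depending on $\varepsilon$.

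For each $J \in 2^{[d]}$ I would apply the $\varepsilon$-approximation condition in direction $c_J$. The width of $Q_d^\triangle$ in direction $c_J$ equals $2$; together with $\min_{P'} c_J \cdot x \le \min_{Q_d^\triangle} c_J \cdot x = -1$ (since $Q_d^\triangle \subseteq P'$), the width bound yields $\max_{P'} c_J \cdot x \le 1 + 2\varepsilon$. By LP duality there exist $\lambda_I \ge 0$ (supported on $\mathcal{I}$) with $\sum_I \lambda_I c_I = c_J$ and $\sum_I \lambda_I \le 1 + 2\varepsilon$.

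The key computation uses the identity $c_I \cdot c_J = d - 2\,h(I,J)$, valid because $c_I, c_J \in \{\pm 1\}^d$, where $h(\cdot,\cdot)$ denotes Hamming distance. Taking the inner product of $\sum_I \lambda_I c_I = c_J$ with $c_J$ and rearranging gives
$$\sum_I \lambda_I\, h(I, J) \;=\; \frac{d}{2}\bigl(\textstyle\sum_I \lambda_I - 1\bigr) \;\le\; \varepsilon d.$$
Since the same inner product forces $\sum_I \lambda_I \ge 1$, the weighted mean of the $h(I, J)$'s is at most $\varepsilon d$, so some $I \in \mathcal{I}$ has $h(I, J) \le \varepsilon d$. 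Combining this with the trivial case $J \in \mathcal{I}$, we conclude that $\mathcal{I}$ is a covering code in $\{0,1\}^d$ of covering radius $\varepsilon d$.

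The sphere-covering bound together with the standard entropy estimate $\sum_{k \le \varepsilon d} \binom{d}{k} \le 2^{H(\varepsilon) d}$ then yields $|\mathcal{I}| \ge 2^{(1 - H(\varepsilon))d}$, so (for $\varepsilon < 1/2$) the choice $\kappa := 2^{1 - H(\varepsilon)}$ satisfies $\kappa > 1$. Absorbing the polynomial factor $d$ lost in Lemma \ref{lem:cara} into any slightly smaller base $\kappa' \in (1, \kappa)$ completes the proof. The main obstacle is setting up cleanly the passage from LP duality to a covering-code condition; once one spots the $\pm 1$ identity $c_I \cdot c_J = d - 2 h(I, J)$ and uses it in tandem with the dual certificate of bounded weight, the remainder is the standard covering/entropy bound.
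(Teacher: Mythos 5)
Your argument reaches the same combinatorial kernel as the paper's proof (a Hamming ball of radius strictly below $d/2$ contains exponentially few of the $2^d$ sign vectors), but it arrives there along a different path. The paper constructs an explicit probe set $\mathcal{S}$ of $2^d$ points with coordinates $\pm(1+\delta)/d$, shows none of them can lie in an $\varepsilon$-approximation, and then counts how many probe points a single facet inequality of $Q_d^\triangle$ can cut off; your version instead fixes each direction $c_J$, invokes strong LP duality to produce a low-weight nonnegative certificate $\sum_I\lambda_I c_I=c_J$, and reads off from the identity $c_I\cdot c_J=d-2h(I,J)$ that some facet normal $c_I$ in the description must be Hamming-close to $c_J$, i.e.\ the set of used facet normals is a covering code. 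These are essentially dual views of the same counting argument; the LP-duality phrasing is cleaner, the probe-point phrasing avoids any appeal to duality.

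There is, however, a real (though easily repaired) gap. You bound the weighted mean of $h(I,J)$ by $\varepsilon d$ and then invoke $\sum_{k\le\varepsilon d}\binom{d}{k}\le 2^{H(\varepsilon)d}$, which only yields $\kappa>1$ when $\varepsilon<1/2$; the lemma, and the way it feeds into Theorem~\ref{thr:main-approx}, require the conclusion for \emph{every} fixed $\varepsilon>0$. Your own computation already contains the fix: from
$\sum_I\lambda_I h(I,J)=\tfrac{d}{2}\left(\sum_I\lambda_I-1\right)$ and $1\le\sum_I\lambda_I\le 1+2\varepsilon$ you get
$\frac{\sum_I\lambda_I h(I,J)}{\sum_I\lambda_I}=\frac{d}{2}\left(1-\frac{1}{\sum_I\lambda_I}\right)\le\frac{\varepsilon d}{1+2\varepsilon},$
and $\frac{\varepsilon}{1+2\varepsilon}<\tfrac12$ for all $\varepsilon>0$. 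Using this radius instead of $\varepsilon d$ gives $\kappa=2^{1-H(\varepsilon/(1+2\varepsilon))}>1$ unconditionally, matching the paper's choice of $\delta>2\varepsilon$ with $1-\gamma=\frac{\delta}{2(1+\delta)}<\tfrac12$. With that one-line sharpening your proof is complete and correct.
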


\begin{proof}We exhibit a set ${\cal S}$ of points that \emph{cannot} belong to any $\varepsilon$-approximation of $Q_d^\triangle$, but such that any facet-defining inequality for $Q_d^\triangle$ can cut off \emph{at most} $t$ of them. Hence by Lemma \ref{lem:cara}, the number of inequalities needed to describe an $\varepsilon$-approximation of $Q_d^\triangle$ is at least   $|{\cal S}|/(dt)$. (Our proof approach can be interpreted as an extension of those in \cite{FS,KW}.)
	
	Let $\delta>2\varepsilon$ be fixed. Consider the family ${\cal S}\subseteq \R^d$ of $2^d$ points having coordinates equal to $\pm(1+\delta)/d$.
	
	\begin{claim}
		Let $x^* \in {\cal S}$. Then $x^*$ does not belong to any $\varepsilon$-approximation of $P$.
	\end{claim}
	
	\begin{cpf}
		Let $c$ be the objective function with $c_i=1$ if $x^*_i>0$ and $c_i=-1$ if $x^*_i<0$. Then for any polytope $P'$ that contains $Q_d^\triangle\cup \{x^*\}$ we have:
		$$\underset{x \in P'}{\max}\, cx - \underset{x \in P'}{\min}\, cx \geq (1+\delta) - (-1) = 2 + \delta > 2(1+\varepsilon)= (1+\varepsilon)\big(\underset{x \in Q_d^\triangle}{\max}\, cx - \underset{x \in Q_d^\triangle}{\min}\, cx\big).$$
		Therefore $P'$ is not an $\varepsilon$-approximation to $Q_d^\triangle$.
	\end{cpf}
	
	\begin{claim}
		Let $I\subseteq [d]$. There exists $\bar\kappa<2$ such that $\sum_{i \in I} x_i - \sum_{i \in [d]\setminus I}x_i \leq 1$ is violated by at most $\bar\kappa^d$ points from ${\cal S}$. \end{claim}
	
	\begin{cpf}
		By the symmetry of $Q_d^\triangle$, it suffices to prove the statement for the inequality $\sum_{i \in [d]} x_i \leq 1$. Fix $x^* \in {\cal S}$ and suppose $t$ of its components are positive. Then
		$$\sum_{i \in [d]} x^*_i = t \frac{1+\delta}{d}  - (d-t)\frac{1+\delta}{d}  = 2t \frac{1+\delta}{d} - 1- \delta,$$ hence the inequality is violated if and only if $$2t \frac{1+\delta}{d} - 1- \delta> 1 \quad \Leftrightarrow \quad t > \frac{d}{2}\cdot \frac{2+\delta}{1+\delta}.$$
		Define $\gamma:=\frac{1}{2}\cdot\frac{2+\delta}{1+\delta}>\frac{1}{2}$. Then a point in ${\cal S}$ violates $\sum_{i \in [d]}x_i \leq 1$ if and only if it has more than $\gamma d$ positive entries. The number of points with this property is upper bounded by
		$$\sum_{j=\lceil\gamma d\rceil}^{d} {d \choose j} =\sum_{j=0}^{\lfloor(1-\gamma)d\rfloor} {d \choose j}\leq 2^{dH(1-\gamma)},$$ where we used the well-known bound $\sum_{j=0}^k {n \choose j}\leq 2^{nH(k/n)}$ that is valid for $k\leq \frac{n}{2}$ and uses the entropy function $H(p)=-x \log_2(p) - (1-p) \log_2 (1-p)$ (see e.g. \cite{Jukna}). It is well-known that $H(p)<H(1/2)=1$ for all $0\leq p<1/2$. Since $\varepsilon$ and $\delta$ are fixed, $1-\gamma$ is a constant strictly less than $1/2$, and thus we conclude that $2^{dH(1-\gamma)}\leq \bar\kappa^d$ for some $\bar\kappa< 2$.
	\end{cpf}
	
	Putting everything together, the number of inequalities needed to describe an $\varepsilon$-approximation of $Q_d^\triangle$ is at least
	$$\frac{|{\cal S}|}{d\bar \kappa^d}=\frac{1}{d}\left(\frac{2}{\bar\kappa}\right)^d=\Omega(\kappa^d) $$
	for some $\kappa > 1$, as required.
	\end{proof}

\begin{proof}[Proof of Theorem \ref{thr:main-approx}]
	Fix $\varepsilon>0$, and let $P'$ be an $\varepsilon$-approximation of $Q_d^\triangle$. Then $P'$ has $\Omega(\kappa^d)$ facets for some $\kappa>1$ by Lemma \ref{lem:approx-cross}. Recall that $Q_d^\triangle$ is the convex hull of two polytopes with $d+1$ facets each. By Lemma \ref{lem:hammer}, every linear extension of $P'$ with a number of facets polynomial in $d$ has $$\Omega\left(\frac{\log \kappa^d}{\log d^t}\right)=\Omega(d/\log d)$$ additional variables, as required.
\end{proof}

\section{A consequence for lift-and-project}\label{sec:split}

Given $P\subseteq [0,1]^d$, the lift-and-project method of Balas, Ceria and Cornu\'ejols \cite{BaCeCo} iteratively constructs polyhedral relaxations of $P\cap \Z^n$ that are the convex hull of the two faces of $P$ defined by $x_j\ge 0$ and $x_j\le 1$, for some $j=1,\dots,d$.

We show that even in this restrictive setting,  Theorem \ref{thr:main} is the best possible. More precisely, we prove the following:

\begin{theorem}\label{thr:our-split}  Fix a polynomial $\sigma$. For each odd $d \in \N$, there exists a formulation of   a nonempty polytope $P\subseteq [0,1]^d $ of size $f$, such that  any formulation of $$\conv((P\cap\{x\in \R^d:x_d=0\})\cup (P\cap\{x\in \R^d:x_d=1\}))$$ of size at most  $\sigma(f)$ has $\Omega(d)$ additional variables.
\end{theorem}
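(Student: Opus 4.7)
The plan is to engineer $P$ so that its two lift-and-project faces $P\cap\{x_d=0\}$ and $P\cap\{x_d=1\}$ reconstruct exactly the hard pair $(P_1,P_2)$ produced by Theorem~\ref{th-construction}. Balas' formulation itself will serve as the ``small'' formulation of $P$, reducing the claim to the counting argument already established for Theorem~\ref{thr:main}.

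First, I invoke Theorem~\ref{th-construction} with the even parameter $d-1$ (so $d$ is odd, as the statement requires) to obtain $(d-1)$-polytopes $P_1,P_2 \subseteq \R^{d-1}$, each with $(d-1)^2$ facets, whose Cayley embedding $C \subseteq \R^d$ is a $d$-polytope with $d^{\Omega(d)}$ facets. By a translation and a positive scaling, which preserve all facet counts, I may assume $P_1,P_2 \subseteq [0,1]^{d-1}$, so $C \subseteq [0,1]^d$. Then I set $P := C$ and choose, as its formulation, the Balas formulation \eqref{eq-balas3} applied to the pair $P_1\times\{0\}$, $P_2\times\{1\} \subseteq \R^d$ (each described by the inequalities of $P_i$ together with two inequalities pinning $x_d$). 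Since $\conv((P_1\times\{0\}) \cup (P_2\times\{1\}))=C=P$, this is a valid formulation of $P$, with total size $f = O(d^2)$ and $d+1$ additional variables.

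Next I identify the two faces. By Remark~\ref{re-pointinCay}, every point of $C$ has last coordinate in $[0,1]$, and equals $0$ (respectively $1$) iff it is a convex combination using only points in the floor copy $P_1\times\{0\}$ (respectively only the ceiling copy $P_2\times\{1\}$). Hence
\[P\cap\{x\in\R^d:x_d=0\}=P_1\times\{0\},\qquad P\cap\{x\in\R^d:x_d=1\}=P_2\times\{1\},\]
and therefore
\[\conv\bigl((P\cap\{x_d=0\})\cup(P\cap\{x_d=1\})\bigr)=C.\]

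Finally, I apply Lemma~\ref{lem:hammer} exactly as in the proof of Theorem~\ref{thr:main}: any formulation of $C$ of size at most $\sigma(f)=\mathrm{poly}(d)$ corresponds to a linear extension $R$ of $C$ which may be assumed full-dimensional, and the number of additional variables is then at least
\[\Omega\!\left(\frac{\log f(C)}{\log f(R)}\right)=\Omega\!\left(\frac{d\log d}{\log d}\right)=\Omega(d),\]
as required. The one step worth double-checking is the face identification, but it follows in a line from the Cayley description; no genuine obstacle arises, since the rest is a direct transplant of the Theorem~\ref{thr:main} argument into the restricted lift-and-project setting.
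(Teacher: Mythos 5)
Your argument does establish the theorem as literally stated, but it takes a fundamentally different route from the paper's and, more to the point, sidesteps the phenomenon the theorem is meant to capture. In the lift-and-project setting of Balas--Ceria--Cornu\'ejols the starting relaxation $P$ is described by a system of inequalities \emph{in the original space} $\R^d$; the content of Theorem~\ref{thr:our-split} is that even such a compactly described $P$ can force $\conv\bigl((P\cap\{x_d=0\})\cup(P\cap\{x_d=1\})\bigr)$ to require $\Omega(d)$ extra variables. The paper achieves this by a homogenization construction (Remark~\ref{re-hom}): starting from $P_0:=D^{Cy}(d-1,(d-1)^2)$ and $P_1:=Q^{Cy}(d-1,(d-1)^2)$, it builds cones $H_0,H_1$ so that $P:=H_0\cap H_1\cap\{0\le x_d\le 1\}$ has only $2(d-1)^2+2$ facets, is contained in $[0,1]^d$, and its two faces cut out by $x_d=0$ and $x_d=1$ are exactly $\binom{P_0}{0}$ and $\binom{P_1}{1}$. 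The lift-and-project closure is then the Cayley embedding, with $d^{\Omega(d)}$ facets, and Lemma~\ref{lem:hammer} applies.

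Your construction, by contrast, takes $P$ to be the Cayley embedding $C$ itself, which already has $d^{\Omega(d)}$ facets in $\R^d$, and relies on the fact that the paper's notion of ``formulation'' admits an extended description (here, Balas') to keep $f$ polynomial. With this choice, $\conv\bigl((P\cap\{x_d=0\})\cup(P\cap\{x_d=1\})\bigr)=P$: the lift-and-project step is the identity, and your argument reduces to applying Lemma~\ref{lem:hammer} directly to $C$ --- essentially a restatement of Theorem~\ref{thr:main}, not a demonstration that the ``restrictive setting'' of lift-and-project is genuinely as hard. If one reads ``formulation of $P$'' in the way lift-and-project requires (an inequality description in $\R^d$, i.e.\ $Q=P$), your $P$ admits no small formulation and the proof breaks down. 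The missing ingredient is precisely the homogenization device (Remark~\ref{re-hom}), which is what produces a $P$ that is simple in the original space yet whose two designated faces recreate the hard pair. I'd encourage you to incorporate it rather than choosing $P$ to coincide with its own lift-and-project closure.
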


Given polytope $Q\subseteq \R^d$ and $x^* \not\in \aff(Q)$, let $C$ be the polyhedral cone generated by
the vectors $\{x - x^* : x\in Q\}$.  The polyhedron $\hom(Q,x^*):=x^*+C$ is the \emph{homogenization} of $Q$ with respect to $x^*$. Note that $F$ is a facet of $Q$ if and only if  $\hom(F,x^*)$ is a facet of  $\hom(Q,x^*)$ and all facets of  $\hom(Q,x^*)$ arise in this way.

\begin{remark}\label{re-hom}   Let $P_0,P_1\in \R^{d-1}$ be $(d-1)$-polytopes and pick $x^0$, $x^1$ in the interior of $P_0$, $P_1$ respectively. There exists $\varepsilon>0$ such that
$$ H_0:=\hom\left(\begin{pmatrix}P_0\\0\end{pmatrix},\begin{pmatrix}x^0\\-\varepsilon\end{pmatrix}\right)\mbox{ contains }\begin{pmatrix}P_1\\1\end{pmatrix}\mbox{ in its interior, and }$$
$$ H_1:=\hom\left(\begin{pmatrix}P_1\\1\end{pmatrix},\begin{pmatrix}x^1\\1+\varepsilon\end{pmatrix}\right)\mbox{ contains }\begin{pmatrix}P_0\\0\end{pmatrix}\mbox{ in its interior.}$$
In particular, if $\bar x$ is a vertex of $ H_0\cap H_1$, then
  $\bar x=\begin{pmatrix}x^0\\-\varepsilon\end{pmatrix}$ or  $\bar x=\begin{pmatrix}x^1\\1+\varepsilon\end{pmatrix}$ or $0<\bar{x}_d<1$.
  \end{remark}

Given $P_0:=D^{Cy}(d-1,(d-1)^2)$ and  $P_1:=Q^{Cy}(d-1,(d-1)^2) \subseteq \R^{d-1}$, let $\varepsilon>0$, $H_0$ and $H_1$ be as in Remark \ref{re-hom}.  By possibly scaling the first $d-1$ coordinates, we may assume that the polytope $P:=H_0\cap H_1\cap \{x\in \R^d:0\le x_d\le 1\}$ is contained in $[0,1]^d$. Note that  $\begin{pmatrix}P_0\\0\end{pmatrix}$ is the facet of $P$ defined by the inequality $x_d\ge 0$ and $\begin{pmatrix}P_1\\1\end{pmatrix}$ is the facet of $P$ defined by the inequality $x_d\le 1$.

\begin{proof}[Proof of Theorem \ref{thr:our-split}.] Let $P$ be the polytope defined as above. By Remark \ref{re-hom}, $P$ has $2(d-1)^2+2$ facets. The proof of  Theorem \ref{th-construction} shows that  the polytope
$$\conv((P\cap\{x\in \R^d:x_d=0\})\cup (P\cap\{x\in \R^d:x_d=1\}))$$
has $d^{\Omega(d)}$ facets. By Lemma \ref{lem:hammer}, any formulation of the above polytope has $\Omega(d)$ additional variables.
  \end{proof}

\paragraph{Acknowledgments} We are grateful to Volker Kaibel and Stefan Weltge for helpful discussions and suggestions regarding the present work.

\end{document}